\DeclareMathOperator*{\argmin}{arg\,min}
\DeclareMathOperator*{\E}{\mathbb{E}}
\newcommand{\RN}[1]{\textup{\uppercase\expandafter{\romannumeral#1}}}
\newcommand{\Rmnum}[1]{\expandafter\@slowromancap\romannumeral #1@}
\DeclarePairedDelimiter\ceil{\lceil}{\rceil}
\newcommand{\R}{\mathbb R}
\newcommand{\A}{\mathcal A}
\newcommand{\F}{\mathcal F}
\newcommand{\Pc}{\mathcal P}
\newcommand{\B}{\mathcal B}
\newcommand{\W}{\mathcal W}
\newcommand{\one}{\mathds{1}}
\newcommand{\nbf}{\noindent\textbf}
\newcommand{\bx}{\mathbf{x}}
\newcommand{\des}{\mathrm{des}}
\DeclareMathOperator{\clip}{clip}
 \newtheorem{example}{Example}
\title{Online switching control with stability and regret guarantees}
\author{%
	\Name{Yingying Li} \Email{yingli2@caltech.edu}\\
		\Name{James A.\ Preiss} \Email{japreiss@caltech.edu}\\
			\Name{Na Li} \Email{nali@seas.harvard.edu}\\
   \Name{Yiheng Lin} \Email{yihengl@caltech.edu}\\
			\Name{Adam Wierman} \Email{adamw@caltech.edu}\\
	\Name{Jeff Shamma} \Email{jshamma@illinois.edu}\\
}
\begin{document}

\maketitle

\vspace{-35pt}

\begin{abstract}%
This paper considers online switching control
with a finite candidate controller pool, an unknown dynamical system, and unknown cost functions. The candidate controllers can be unstabilizing policies. We only require at least one candidate controller to satisfy certain stability properties, but we do not know which one is stabilizing. 
We design an online algorithm that guarantees finite-gain stability throughout the duration of its execution.
We also provide a sublinear policy regret guarantee compared with the optimal stabilizing candidate controller. Lastly, we numerically test our algorithm on  quadrotor planar flights and compare it with a classical switching control algorithm, falsification-based switching,  and a classical multi-armed bandit algorithm, Exp3 with batches.
\end{abstract}

\vspace{-0.2cm}
\section{Introduction}




This paper considers an online switching control problem with a finite pool of candidate controllers $\{\pi_1, \dots, \pi_N\}$, an unknown nonlinear system $x_{t+1}=f(x_t,u_t, w_t )$ with process noises $w_t$, and unknown (time-varying) cost functions $c_t(x_t, u_t)$. Notice that some candidate controllers can be unstabilizing policies.   We only require at least one candidate controller to be stabilizing, but we may not know which one(s) are the stabilizing controllers.\footnote{A switching control problem with at least one stabilizing candidate controller  is sometimes  called a `feasible' problem in the literature \citep{sajjanshetty2018transient,stefanovic2008safe}.} 
We consider a single-trajectory  setting, where the online switching control algorithm \citep[also called a `supervisor'  in the literature,][]{ hespanha2003overcoming} implements a candidate controller at each stage \textit{without} resetting the system state.
Our goal is to design an online algorithm that both stabilizes the system and optimizes the total cost among the candidate controllers.

Online switching control enjoys a long history of research, see e.g., \citep{hespanha2003overcoming,stefanovic2008safe,al2009switching,patil2021unfalsified},  and wide applications, e.g., power systems \citep{meng2016microgrid,dragivcevic2013supervisory}, healthcare \citep{bin2021hysteresis,marchetti2008improved}, autonomous vehicles \citep{1384369}, Internet of Things \citep{zolanvari2019machine}, etc. 
Online switching control is particularly useful in complex scenarios, such as when the problem has non-continuous uncertainties like unknown system orders \citep{liu2017robust} and hybrid systems \citep{garcia2013optimal};   when  multiple control designs are used, for example, comparing model predictive control and PID control \citep{nikoofard2014design};
and when the controller updates are computationally demanding in real-time \citep{zhou1998essentials}.

In the online switching control literature, 
most papers focus on system stabilization, and various approaches have been proposed, e.g.,  estimation-based supervisory control \citep{hespanha2003overcoming}, performance-based falsification \citep{sajjanshetty2018transient,rosa2011stability,al2009switching,stefanovic2008safe},  multi-model adaptive control \citep{shahab2021asymptotic,kuipers2010multiple}, and others.
As for the optimality analysis, most papers either only analyze convergence/asymptotic optimality, e.g., \citep{shahab2021asymptotic,kuipers2010multiple}, or
discuss the optimality  with respect to a cost function designed for stability purposes, instead of a cost function from ``nature'', e.g., \citep{al2009switching,sajjanshetty2018transient,stefanovic2008safe}. Hence, the non-asymptotic optimality on the actual cost $\sum_t c_t(x_t, u_t)$  is largely under-explored for online switching control.

In contrast, there is rich literature in the online learning area that aims to optimize non-asymptotic performance/regret with respect to the actual cost functions \citep{auer2002nonstochastic,arora2012online}. Since online switching control is closely related to online learning, especially multi-armed bandit (MAB) with memory (each candidate controller is an arm, and the current cost depends on the controllers used previously), it is tempting to leverage MAB-with-memory algorithms for online switching control \citep{lin2022online}. However, with unstabilizing candidate controllers, our problem does not satisfy the uniform bounded costs in the MAB literature \citep{auer2002nonstochastic,arora2012online,lin2022online}. Further,   MAB algorithms may cause \textit{unstable}  systems when some candidate controllers are unstabilizing (see, e.g.,  Figure \ref{fig: example exp3 fail}).


Therefore, a natural question arises: \textit{Can we design an online switching control algorithm with both stability and non-asymptotic optimality/regret guarantees on the true cost functions?}

\paragraph{Contributions.} We design an online switching control algorithm Exp3-ISS by integrating an MAB algorithm Exp3 with a stability  certification rule. Exp3-ISS deactivates controllers that fail the stability certification, then switches to other controllers that have not been deactivated.

Theoretically, our Exp3-ISS guarantees finite-gain stability and a sublinear policy regret when compared with the optimal stabilizing candidate controller. We prove a   regret bound that scales as $\tilde O(N^{1/3}T^{2/3})+\exp(O(\mathbb M))$, where $T$ is the horizon length, $N$ is the number of candidate controllers, and $\mathbb M$ is the number of candidate controllers without  the desirable stability properties. Notice that $\tilde O(N^{1/3}T^{2/3})$    is the optimal  regret for MAB with memory \citep{dekel2014bandits}, which suggests  the optimality for our online switching control problem due to its close relation to MAB with memory. The regret $\exp(O(\mathbb M))$ is intuitive  if candidate controllers are black boxes, in which case we must try each candidate controller at least once to determine its performance, and trying $\mathbb M$ unstabilizing controllers consecutively may result in exponentially large states and  regrets. 

Numerically, we test Exp3-ISS on  quadrotor planar flight simulations and compare it with Exp3 and the falsification-based switching algorithm in \citep{al2009switching}.


\paragraph{Related work.} 
\textit{Online switching control}   has been studied under different names, e.g., supervisory control \citep{hespanha2003overcoming}, logic-based switching control \citep{aguiar2007trajectory}, and multi-model adaptive control \citep{kuipers2010multiple}. There are  two major types of switching rules: model-estimation-based rules \citep{hespanha2003overcoming} and performance-based rules that do not estimate models \citep{al2009switching}. This paper belongs to the second type. 

Our stability  certification is inspired by \citet{rosa2011stability} and \citet{al2009switching} but is slightly different because our certification is checked at every stage, while the certification in \citet{rosa2011stability,al2009switching} is only checked  every $\Delta_T$ stages, where $\Delta_T$ is determined by their algorithm. The combination of a stability certification and a performance-optimization algorithm  was also discussed  in \citet{rosa2011stability}, but without optimality guarantees.

There are other stability certificates, e.g., control Lyapunov functions  \citep{brunke2022safe}.



\textit{Online control and online learning.} Online control and its connection with online learning (with memory) have attracted a lot of attention recently \citep{wang2009fast,lin2022online,li2021online,kakade2020information,boffi2021regret}. Most papers consider linear systems, but there is a growing interest in nonlinear systems \citep{kakade2020information,boffi2021regret,lin2022online}. This work is mostly related to \citep{lin2022online,arora2012online,dekel2014bandits}. However, these papers all assume uniform bounded cost functions, which corresponds to all candidate controllers being stabilizing in our case. One major contribution of this paper is to guarantee stability via a novel online control design despite unstabilizing candidate controllers.  

Many online control and learning-based control papers assume to know a stabilizing policy  beforehand \citep{lin2022online,agarwal2019online,fazel2018global,li2021online}, which can be restrictive in certain applications. There is a growing interest on online (learning-based) control without prior knowledge of a stabilizing policy. This paper  contributes to this area since we do not know which candidate controller is stabilizing. 
Besides, our result is related with \citet{chen2021black}, which consider online linear control  and provide a regret bound of  $\tilde O(\text{poly}(d)T^{2/3})+\exp(\text{poly}(d))$, where $d$ is the system dimension. Notice that \citet{chen2021black} only consider linear policies so their regret can depend on the system dimension, while our problem considers black-box controllers without  restrictions or knowledge of controller structures for  nonlinear systems, so our regret bound depends on the number of unstabilizing candidate controllers.  It is an interesting future direction to study how to leverage controller structures in online nonlinear control to generate regret bounds that also depend on the system dimension instead of the number of controllers. 



\textit{Reinforcement learning.} This work is also related to model-free reinforcement learning, especially zeroth-order policy gradient for control, which also updates policies based on observed cost performance \citep{fazel2018global,malik2019derivative,li2021distributed}. The  major difference is that we consider a finite policy pool while policy gradient considers a continuous policy pool. Further,  under proper conditions, policy gradient can guarantee every selected controller updates with  small enough gradient steps to be stabilizing, while our problem allows quick updates of controllers at a cost of  potential encounters with unstabilizing policies.

\nbf{Notations.} 
$\|\cdot\|$ refers to 
 the Euclidean norm.
 
 \vspace{-7pt}
\section{Problem formulation}

This paper focuses on an online supervisory/switching control problem.
We consider an unknown nonlinear dynamical system  $x_{t+1}=f(x_t, u_t, w_t)$ and unknown time-varying cost functions $c_t(x_t, u_t)$,
with state $x_t \in \R^n$, action $ u_t\in \R^m$,
and process noise $w_t \in \R^n$.
We consider a \textit{bandit} setting, i.e., we can only observe the value of $c_t(x_t, u_t)$ after observing $x_t$ and implementing $u_t$ at stage $t$.
The process noise $w_t$ is bounded by a known set $\W=\{w: \|w\|_2\leq w_{\max}\}$ and can be obliviously adversarial, i.e., $w_t$ does not depend on the history states and actions.
We consider a finite pool of candidate controllers
\begin{equation}
	\{i\in \Pc_0=\{1, \dots, N\} : u_t=\pi_i(x_t)\}.
\end{equation}
Some candidate controllers may not stabilize the system, and we do not know which controllers stabilize the system. Further, we treat the candidate controllers as black boxes in this paper and do not assume knowledge of their explicit forms, which is convenient for complex controllers, e.g., when the controllers are represented by neural networks. It is left as future work to consider candidate controllers with known structures.


Our goal is to design an online algorithm $\A$ that selects a candidate controller $I_t\in \Pc_0$ at each stage $t$ in order to  both \textit{stabilize} the system and \textit{optimize} the total cost $J_T(\A)$ defined below.

\[
\textstyle
J_T(\A)= \sum_{t=0}^T c_t(x_t(\A), u_t(\A)), \quad \text{where }  u_t(\A)=\pi_{I_t}(x_t(\A)).
\]

In the supervisory control literature, this online algorithm is often called a ``supervisor'' \citep{hespanha2001tutorial,hespanha2003overcoming,tsao2001unfalsified,al2009switching}.
%
%
%
We now formally introduce our assumptions and our performance metric, policy regret.

\paragraph{1) Assumptions on the candidate controllers.} 
In our problem, we do not need all the candidate controllers to be stabilizing controllers. In fact, we only require at least one of them to satisfy desirable stability properties, which are formally introduced below.

Firstly, we consider input-to-state stability (ISS), which is commonly used in nonlinear systems with process noises $w_t$ \citep{sontag2008input}. Further,  for the purpose of non-asymptotic analysis, we consider exponential-ISS (E-ISS)  below (see e.g., \cite{shi2021meta,8430946}). 
\begin{definition}[E-ISS]\label{def: exp ISS}
	A controller $\pi$ is called exponential-ISS (E-ISS)  with  parameters $(\kappa, \rho,\beta)$ if, for any $x_0\in\R^n$ and $\|w_t\|_2\leq w_{\max}$ for all $t\geq 0$, 
	 the trajectory $x_{t+1}=f(x_t,\pi(x_t), w_t)$ satisfies
 $\|x_t\|_2\leq \kappa \rho^t \|x_0\|_2+ \beta w_{\max}$.\footnote{
 Strictly speaking, this is a relaxed version of E-ISS since we do not require exponentially decaying dependence on history disturbances as in \citep{shi2021meta}.}
\end{definition}

In addition, we consider incremental stability ($\delta$-S), which is commonly adopted to  rigorously quantify the dependence of the current states on the history (see e.g., \citet{angeli2002lyapunov,ruffer2013convergent}). For the purpose of  non-asymptotic analysis, we  consider exponentially decaying dependence, i.e., incremental
exponential stability ($\delta$-ES).
\begin{definition}[$\delta$-ES]\label{def: incremental global exp stable}
	A controller $\pi$ is called incrementally  exponentially stable ($\delta$-ES) with parameters $(\kappa, \rho)$ if we have
	$\|x_t-y_t\|_2\leq \kappa \rho^t \|x_0-y_0\|_2$
	for  two trajectories $x_{t+1}=f(x_t,\pi(x_t), w_t)$ and $y_{t+1}=f(y_t,\pi(y_t), w_t)$ with any $x_0, y_0\in\R^n$ and any $\|w_s\|_2\leq w_{\max}$, $s\leq t-1$.
\end{definition}

\begin{assumption}[On  candidate controllers]\label{ass: one policy good}
	There exists at least one candidate controller $\pi_k$ for $ k\in \Pc_0$ to satisfy Definitions \ref{def: exp ISS}  and \ref{def: incremental global exp stable}  with parameters $(\kappa, \rho,\beta)$, which are known a priori.\footnote{For simplicity, we assume Definition \ref{def: exp ISS} and \ref{def: incremental global exp stable} share the same $\kappa, \rho$, but our results can still hold for different parameters. } 
 Further, 	$\pi_i(x)$ for all $i \in \Pc_0$ are $L_{\pi}$-Lipschitz continuous. We define $\bar \pi_0$ as $\max_{i\in\Pc_0}\|\pi_i(0)\|\leq \bar \pi_{0}$.
\end{assumption}

Notice that there are several important controller designs that satisfy Definitions \ref{def: exp ISS}  and \ref{def: incremental global exp stable}. For example, it is straightforward to verify that stabilizing linear controllers on linear systems satisfy Definitions \ref{def: exp ISS}  and \ref{def: incremental global exp stable}. Similarly, feedback linearization controllers on nonlinear systems also satisfy the two definitions above because the resulting closed-loop system is linear. Furthermore, Definitions \ref{def: exp ISS}  and \ref{def: incremental global exp stable}  can be implied by exponentially incremental ISS (E$\delta$-ISS), which is commonly adopted in the online nonlinear control literature 
\citep{boffi2021regret,tsukamoto2021contraction}. Besides, one can design the controller based on one stability property  and verify the other stability, e.g.,  min-norm policy by an E-ISS control Lyapunov function can also satisfy $\delta$-ES in some cases (see \citep{Yingying2022Supp}).


The candidate controllers can be constructed by e.g., (i) domain knowledge of potentially well-performing policies,  (ii) different control designs  with a finite list of possible policy parameters associated with each control design, (iii)  listing a finite set of possible system dynamics $\mathcal D$ and designing controllers for this set, (iv)  a combination of the methods above, etc. (see e.g., \citep{hespanha2003overcoming} for more discussions). For method (iii), if the true  system belongs to $\mathcal D$ and if the controllers designed for each possible system satisfy the desirable stability properties and the Lipschitz continuity when the corresponding system is the true system, then Assumption \ref{ass: one policy good} is satisfied. In practice, when the true system does not belong to $\mathcal D$ but is close to $\mathcal D$, and if the control design enjoys some robustness, our algorithm can still generate desirable numerical performance as shown in Section \ref{sec:experiments}. Assumption \ref{ass: one policy good} is mostly needed for  theoretical analysis (see  Remarks \ref{remark: assumption for algo implement}-\ref{remark: choose kappa rho beta} in Section \ref{sec: algorithm} for more discussions).



Lastly,  Assumption \ref{ass: one policy good}  assumes to know the  parameters $(\kappa, \beta, \rho)$  a priori, which is for simplicity and was similarly assumed in the online linear control literature \citep{agarwal2019online,minasyan2021online}. Remark \ref{remark: choose kappa rho beta}
briefly discusses how to address the case with unknown parameters.



\paragraph{2) Performance metric.} We measure the optimality  performance of our online algorithm by policy regret, which compares with the optimal policy that satisfies Definitions \ref{def: exp ISS}  and \ref{def: incremental global exp stable}.
\begin{definition}[Policy regret] We define
$\textup{PolicyRegret}(\A)=\E_{(I_t)_{t\geq 0}}J_T(\A)- \min_{i\in \B}J_T(\pi_i),$
where the expectation is over the potentially random controller selection $I_t$ generated by algorithm $\A$ and
	$\B=\{i\in \Pc_0\mid \text{$\pi_i$ satisfies Definitions  \ref{def: exp ISS} and \ref{def: incremental global exp stable}   with parameters $(\kappa, \rho,\beta)$.} \}.$
\end{definition}

In addition, we adopt the finite-gain stability, which is a commonly used stability measure for nonlinear systems with process noise \citep{sastry2013nonlinear}.
\begin{definition}[Finite-gain stability]
For any $1\leq p \leq +\infty$, a system $x_{t+1}=f(x_t, w_t)$ is called finite-gain $l_p$ stable if there exists $0\leq M_1, M_2 <+\infty$  for any $x_0, T$ and any $  w_t\in \mathcal W$ such that 
\[
\textstyle
(\sum_{t=0}^{T}\|x_t\|_2^p)^{1/p} \leq M_1 (\sum_{t=0}^{T}\|w_t\|_2^p)^{1/p}+M_2.
\]

	\end{definition}


\paragraph{3) Assumptions on the dynamics and costs.} We consider Lipschitz continuous nonlinear dynamics with 0 as the equilibrium point below.  

\begin{assumption}[On dynamics]\label{ass: f Lip}
	$f$ is $L_f$-Lipschitz continous with respect to $(x,u,w)$, i.e.,
	for any $x, u, w, x', u', w' \in \R^n$ ($w$ can be in the bounded region), i.e.,
	$|f(x,u,w)-f(x',u', w')|\leq L_f(\|x-x'\|+\|u-u'\|+\|w-w'\|).$
	Further,  $f(0,0,0)=0$.
\end{assumption}
We consider locally Lipschitz continuous cost functions below, which include  quadratic tracking cost $(x_t-\hat x_t)^\top Q (x_t-\hat x_t)+(u_t-\hat u_t)^\top R (u_t-\hat u_t)$ with bounded $\{\hat x_t, \hat u_t\}$  as special cases.
\begin{assumption}[On cost functions]\label{ass: ct}
	There exists $ L_{c1}, L_{c2}$ such that
	$c_t(x,u)$ satisfies the following inequality for any $t$, $x,x'$, $u,u'$:
	$ |c_t(x,u)-c_t(x',u')| \leq (L_{c1}(\max(\|x\|, \|x'\|)+\max(\|u\|,\|u'\|))+L_{c2})(\|x-x'\|+\|u-u'\|) $.
	Further, for all $c_t(x,u)$, there exists $c_0\geq 0$ such that $0\leq c_t(0,0)\leq c_0$.
\end{assumption}
\vspace{-4pt}

For the rest of this paper, we consider {$\kappa\geq 1$, $\beta \geq 1$, $L_f\geq 1$,  $L_{\pi}\geq 1$} for analytical simplicity.\footnote{This is without loss of generality because, if $\kappa<1$ as an example,  we can define $\kappa'=\max(\kappa, 1)$.}

\vspace{-6pt}
\section{Algorithm design}\label{sec: algorithm}
  \begin{figure}
     \centering
     \begin{subfigure}[b]{0.27\textwidth}
         \centering
        \includegraphics[width=\textwidth]{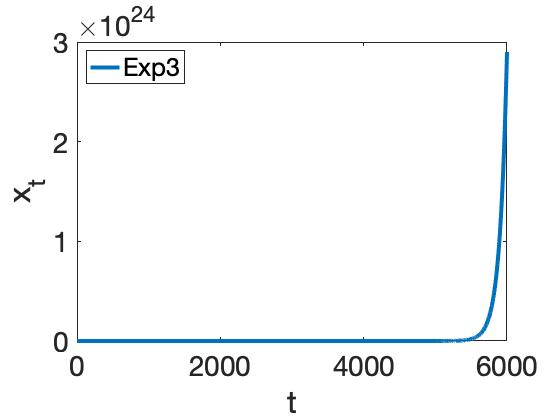}
         \caption{$K_i\in [-1,-0.3, 1]$}
         \label{fig:y equals x}
     \end{subfigure}
     \hfill
     \begin{subfigure}[b]{0.27\textwidth}
         \centering
         \includegraphics[width=\textwidth]{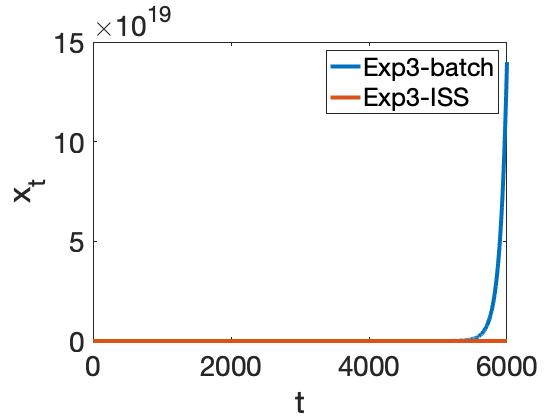}
         \caption{$K_i\in [-1, 0, 1]$}
         \label{fig:three sin x}
     \end{subfigure}
     \hfill
     \begin{subfigure}[b]{0.27\textwidth}
         \centering
         \includegraphics[width=\textwidth]{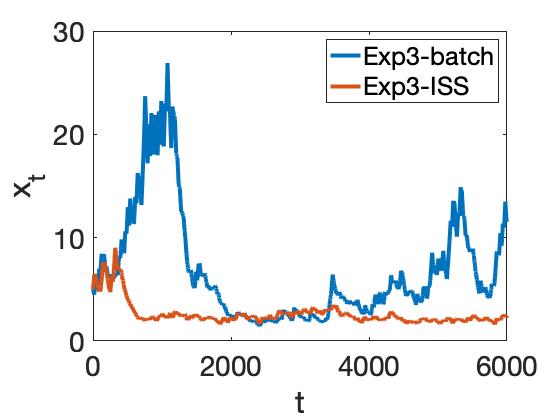}
         \caption{$K_i\in [-1, -0.3, 1]$}
         \label{fig:five over x}
     \end{subfigure}
       	\caption{Examples where Exp3 in \citep{auer2002nonstochastic} and Exp3-batch in \citet{arora2012online,lin2022online} fail to stabilize the system, in comparison to our Exp3-ISS, which stabilizes the system. Consider a   system  $x_{t+1}=x_t +0.01 u_t +w_t$ with $w_t$  i.i.d. generated from $\text{Uniform}[-0.3, 0.7]$. Consider candidate controllers $u_t=K_i x_t$, where $K_i$ are specified in the subfigure captions.}
	\label{fig: example exp3 fail}
\end{figure}


In this section, we introduce our online algorithm for selecting candidate controllers from a  controller pool that may contain unstable controllers.

This problem is closely related to multi-armed bandit (MAB) with memory, by viewing each candidate controller as one arm and noticing that the cost of the current controller depends on the history of the controllers.  Thus, it is tempting to apply MAB (with memory) algorithms to our problem, such as Exp3 \citep{auer2002nonstochastic} and Exp3-batch \citep{lin2022online,arora2012online}.
 However, it is easy to construct examples where Exp3(-batch) fails in this setting.


 \begin{example}[When  Exp3(-batch) fails.]
 \textup{In Figure \ref{fig: example exp3 fail}, we view each controller as an arm and implement  Exp3 \citep{auer2002nonstochastic} and Exp3-batch \citep{lin2022online,arora2012online}, where Exp3-batch is a classical method for MAB with memory.} 
 	
 	\textup{Figure \ref{fig: example exp3 fail}(a) shows that  Exp3 fails to stabilize the system even when a majority of candidate controllers are stabilizing, which is expected due to the memory-dependence of our problem. 
    However, even with batches, Exp3 may still perform poorly, as shown in Figure \ref{fig: example exp3 fail}(b-c). First, when a majority of candidate controllers do not enjoy desirable stability properties (which is exponential stability in this case), Figure \ref{fig: example exp3 fail}(b) shows  Exp3-batch can result in an exponential growth of states. This is because Exp3-batch is only guaranteed to work under  \textit{bounded costs} and \textit{short memory}. However, unstabilizing candidate controllers' costs are \textit{unbounded}, when the unstabilizing candidate controllers already steered the state $x_t$ to be  very large,  the stabilizing controller will also generate a large cost when implemented at stages $t, \dots, t+\tau-1$.
    In other words, the problem  has \textit{long memory} under large states.  Consequently, Exp-batch may fail when there are many unstabilizing candidate controllers. Second, even when the number of unstabilizing candidate controllers is small, Exp3-batch may still perform poorly, as shown in 
  Figure \ref{fig: example exp3 fail}(c), where Exp3-batch generates large spikes in the state trajectory. This is due to explorations of unstabilizing candidates and is not rare because the cost of an unstabilizing candidate controller in one batch may not be forbiddingly large when it starts from a small initial state of this batch thanks to the  stabilizing policies implemented previously.} 
  %
%
\textup{In conclusion, only adding batches to Exp3 is not enough to provide desirable stability performance for online switching control.} 

  
  
  
 	
 	
 	
 	
 	\end{example}


\begin{algorithm}
	\caption{Exp3-ISS}	\label{alg: exp3 iss}
	\begin{algorithmic}[1]
		\STATE  	\textbf{Input:} $(\eta_j)_{j\geq 0}$ where $\eta_j$ is non-increasing. $\tau$. $\kappa, \rho, \beta$. $\tilde G_{-1}(i)=0$ for any $i\in \Pc_0$. A uniform distribution $p_0$ defined on $\Pc_0$. $t_0=0$.
		\FOR{Batch $j=0,1, 2, \dots, $}
		\STATE Initialize $\Pc_{j+1}=\Pc_j$.
		Select $I_j $ from distribution $p_j$. Terminate the algorithm if $\Pc_j $ is empty.
		
		\FOR{$t=t_j, \dots, \min(t_j+\tau-1, T)$}
		\STATE	Implement $\pi_{I_j}$, observe $x_{t+1}$.
		\IF{$ \|x_{t +1}\|_2>\kappa \rho^{t+1-t_j}\|x_{t_j}\|_2 +\beta w_{\max}$}
		\STATE 	Set $\Pc_{j+1}=\Pc_j-\{I_j\}$.
		\STATE \textbf{Break}
		\ENDIF
		\ENDFOR
	\STATE	Let $t_{j+1}=t+1$.
\STATE	Let $g_j(I_j; I_{j-1:0})=\frac{1}{\tau}\sum_{j=t_j}^{t_{j+1}-1} c_t(x_t, u_t)$ and
	 $\tilde g_j(i; I_{j:0})=\frac{g_j(i; I_{j-1:0})}{p_j(i)} \one_{(I_j=i)}$ for  $i\in \Pc_{j+1}$.
\STATE	Let $\tilde G_j(i;I_{j:0})= \tilde G_{j-1}(i;I_{j:0})+ \tilde g_j(i;I_{j:0})$ for  all $i\in \Pc_{j+1}$.
\STATE	Define $$p_{j+1}(i)=\frac{\exp(-\eta_j \tilde G_j(i;I_{j:0}))}{\sum_{k\in \Pc_{j+1}} \exp(-\eta_j\tilde G_j(k;I_{j:0}))}, \quad \forall i\in \Pc_{j+1}.$$
		\ENDFOR
	\end{algorithmic}
\end{algorithm}
%
%
%

\vspace{-0.2cm}

To handle the unstable candidate controllers, we design Exp3-ISS in Algorithm \ref{alg: exp3 iss}. In particular, we utilize Definition \ref{def: exp ISS} to construct an ISS stability certificate (see Line 6 of Algorithm \ref{alg: exp3 iss}). We de-activate the controllers that fail the certificate (Line 6-8),  update the controller selection probabilities $p_{j+1}$ for the active controller pool $\Pc_{j+1}$ (Line 10-12), and  select a controller from the active controller pool at the start of each batch (Line 3). In this way, Exp3-ISS can stabilize the system, which is reflected in Figure \ref{fig: example exp3 fail}(b-c) and will be formally proved in Theorem \ref{thm: finite gain stability}.

\begin{remark}\label{remark: assumption for algo implement}
	 Notice that  Algorithm \ref{alg: exp3 iss} can be implemented as long as there exists an E-ISS candidate controller.  We do not need the controller to also satisfy $\delta$-ES for  implementation and for finite-gain stability in Theorem \ref{thm: finite gain stability}. This can be helpful in practice when $\delta$-ES is difficult to satisfy or verify. 

  Though Assumption \ref{ass: one policy good} requires global stability properties for theoretical analysis, since our Exp3-ISS can guarantee $x_t$ to stay in a relatively small region, local stability properties within this region are already enough for successful implementation of our algorithms. This greatly extends the applicability of our algorithm and is reflected in our numerical experiments in Section \ref{sec:experiments}.
	\end{remark}
\begin{remark}\label{remark: choose kappa rho beta}
If none of the controllers in $\Pc_0$ is E-ISS, Algorithm \ref{alg: exp3 iss} may terminate (Line 3) during implementation since it may de-activate all the controllers. 	If some controllers in $\Pc_0$ are E-ISS, theoretically, we can select large enough $\kappa, \beta$ and $\rho$ close to 1 to ensure at least some controllers can pass the ISS-stability certificate in Line 6 of Algorithm \ref{alg: exp3 iss}, thus avoiding early termination of the algorithm. 	In practice, we can also start with reasonably large $\kappa, \beta, \rho$. If all the controllers are de-activated under the current parameters, we can increase the parameters by, e.g., $\kappa\leftarrow \kappa+\Delta\kappa, \beta\leftarrow \beta+\Delta\beta$ and $\rho\leftarrow \frac{1+\rho}{2}$, then re-start Algorithm \ref{alg: exp3 iss}. If there exists an E-ISS controller in $\Pc_0$, Algorithm \ref{alg: exp3 iss} can still guarantee stability since there will  only be finite times of parameter updates. In practice, if we do not know whether there exists an E-ISS candidate controller, we can adopt additional termination rules, e.g., terminate the algorithm if the updated  $\kappa, \beta, \rho$ exceed certain thresholds.


	
\end{remark}


\vspace{-0.3cm}
\section{Theoretical results}\label{sec: theory}




In this section, we discuss our main results, which provide stability and regret bounds for our online algorithm. 
For ease of reference, we introduce two useful notations below.
First, we define $\mathbb M$  as the number of candidate controllers that do not satisfy Definition \ref{def: exp ISS}.
\begin{definition}\label{def: B0 M0}
	Define $\mathcal B_0$ as the set of controllers that do not satisfy Definition \ref{def: exp ISS} under the $\kappa, \beta, \rho$ used in Algorithm \ref{alg: exp3 iss}. Let
 $\mathbb M$ denote  the number of controllers in $\mathcal B_0$. Notice that $\mathcal B_0 \subseteq \mathcal B^c$.
\end{definition}

Second, we let $J$ denote the number of batches in Algorithm \ref{alg: exp3 iss} for $T$ stages.\footnote{The last batch's index is $J-1$.} It
is shown in our online supplementary material~\citep{Yingying2022Supp}
that $J$ is upper bounded by the following:

\vspace{-0.2cm}
\begin{lemma}[Number of batches]\label{lem: bdd on J} In horizon $T$, the number of batches satisfies
	$J\leq \ceil{\frac{T-\mathbb M}{\tau}}+\mathbb M$.
\end{lemma}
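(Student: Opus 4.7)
The plan is to partition the $J$ batches into two types: \emph{short} batches, which exit the inner \texttt{for} loop early via the \textbf{Break} in Line 8 because the ISS certificate in Line 6 is violated; and \emph{full} batches, which complete all $\tau$ inner iterations (or terminate only because the horizon $T$ is reached). Let $J_s$ and $J_f$ denote their counts, so $J = J_s + J_f$.

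The first key step is to observe that only controllers in $\mathcal{B}_0$ can ever trigger a \textbf{Break}. Indeed, if $\pi_{I_j}$ is E-ISS with parameters $(\kappa,\rho,\beta)$, then applying Definition~\ref{def: exp ISS} to the sub-trajectory starting at stage $t_j$ with initial state $x_{t_j}$ yields $\|x_{t+1}\|_2 \le \kappa\rho^{t+1-t_j}\|x_{t_j}\|_2 + \beta w_{\max}$ for every $t \ge t_j$, which is precisely the negation of the deactivation condition in Line 6. Hence the Line 6 inequality never fires for an E-ISS controller, so each short batch deactivates a distinct element of $\mathcal{B}_0$, and the number of short batches satisfies $J_s \le |\mathcal{B}_0| = \mathbb{M}$.

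The second key step is a simple length count. Every short batch uses at least one stage, so they consume at least $J_s$ of the $T$ available stages in total. The remaining $T - J_s$ stages are distributed among the $J_f$ full batches, each of which uses exactly $\tau$ stages except possibly the very last batch (which can be shorter if truncated by $T$). Therefore $J_f \le \lceil (T - J_s)/\tau \rceil$, and combining gives
\[
J \;\le\; J_s + \left\lceil \frac{T - J_s}{\tau} \right\rceil.
\]
The final step is to verify that the right-hand side, viewed as a function of $J_s$, is non-decreasing: incrementing $J_s$ by $1$ increases the first term by $1$ and changes the ceiling by either $0$ or $-1$, so the sum is non-decreasing. Since $J_s \le \mathbb{M}$, we conclude $J \le \mathbb{M} + \lceil (T - \mathbb{M})/\tau \rceil$, as claimed.

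I do not expect any serious obstacle here; the only subtlety is being careful that the deactivation condition and the E-ISS definition are aligned time-wise (batch index $t_j$ acts as the local time origin), and that the last full batch may be truncated by $T$, which is absorbed harmlessly into the ceiling.
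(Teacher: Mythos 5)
Your proof is correct and follows essentially the same route as the paper's: bound the number of \textbf{Break}-terminated batches by $\mathbb M$ (since E-ISS controllers never violate the Line 6 certificate and each violator is deactivated permanently), count the remaining batches via the $\tau$-stage length, and use monotonicity of $M \mapsto \lceil (T-M)/\tau\rceil + M$ for $\tau \ge 1$. You actually supply more detail than the paper does on why only controllers in $\mathcal B_0$ can trigger a \textbf{Break}.
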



We are now ready to present our stability results.

\vspace{-0.2cm}
\begin{theorem}[Finite-gain stability]\label{thm: finite gain stability}
When  $\tau \geq \frac{\log(2\sqrt 2 \kappa)}{-\log \rho}$,  Algorithm \ref{alg: exp3 iss} is finite-gain $l_1$ stable:

\vspace{0.23cm}
\noindent$\sum_{t=0}^T \|x_t\|\leq  \beta w_{\max} (T+ \alpha_1 J) +\alpha_2 (L_f (1+L_{\pi}) \kappa)^{\mathbb M}\|x_0\|+ \alpha_3 (L_f (1+L_{\pi}) \kappa)^{\mathbb M}(\beta w_{\max}+\bar \pi_0),$
\vspace{0.23cm}

\noindent where $\alpha_1= \frac{\kappa}{1-\rho}\frac{1}{1-\kappa \rho^{\tau}}$, $\alpha_2 = \alpha_1\frac{L_f (1+L_{\pi}) \kappa}{L_f (1+L_{\pi}) \kappa-1}$,  $\alpha_3 = \alpha_2 (\frac{L_f(1+L_{\pi})\kappa}{1-\kappa \rho^{\tau}}+L_f(2+L_{\pi}))$.
Similarly, Algorithm \ref{alg: exp3 iss} also achieves finite gain   $l_2$ stability: 

\vspace{0.23cm}

\noindent$	\sum_{t=0}^T \|x_t\|^2 = O((L_f (1+L_{\pi}) \kappa)^{2\mathbb M}\|x_0\|^2+ \beta^2w_{\max}^2(T+J) +(L_f (1+L_{\pi}) \kappa)^{2\mathbb M}(\beta^2w_{\max}^2+\bar \pi_0^2) )$.
%
\end{theorem}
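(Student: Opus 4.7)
The plan is to exploit the structure Algorithm \ref{alg: exp3 iss} imposes on the trajectory. Within each batch, the ISS certificate in Line 6 forces a geometric bound on $\|x_t\|$, and any E-ISS controller satisfying Definition \ref{def: exp ISS} with the parameters $(\kappa,\rho,\beta)$ can never violate that certificate. Hence only controllers in $\mathcal B_0$ can trigger a failure, and since a failing controller is de-activated, the number of failed batches is at most $\mathbb M$. What remains is to iterate the batch-to-batch state bound, noting that although a failed batch can inflate $\|x_{t_j}\|$ by a factor $M_1:=L_f(1+L_\pi)\kappa$, successive successful batches contract it by $\gamma:=\kappa\rho^{\tau}\leq \tfrac{1}{2\sqrt 2}<1$ thanks to the hypothesis $\tau\geq \log(2\sqrt 2\kappa)/(-\log\rho)$.

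\textbf{Per-batch sum.} In both a successful batch (all $\tau$ tests pass) and a failed batch (the test fails only at $t_{j+1}$), the certificate $\|x_t\|\leq \kappa\rho^{t-t_j}\|x_{t_j}\|+\beta w_{\max}$ holds throughout $t\in[t_j,t_{j+1}-1]$. Summing the geometric series gives $\sum_{t=t_j}^{t_{j+1}-1}\|x_t\|\leq \tfrac{\kappa}{1-\rho}\|x_{t_j}\|+\beta w_{\max}(t_{j+1}-t_j)$, so $\sum_{t=0}^T\|x_t\|\leq \tfrac{\kappa}{1-\rho}\sum_{j=0}^{J-1}\|x_{t_j}\|+\beta w_{\max}(T+1)$. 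It therefore suffices to bound $\sum_j A_j$ with $A_j:=\|x_{t_j}\|$.

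\textbf{Batch-boundary recursion and iteration.} For a successful batch the last passing certificate gives $A_{j+1}\leq \gamma A_j + \beta w_{\max}$. For a failed batch the certificate still holds at $t_{j+1}-1$, so $\|x_{t_{j+1}-1}\|\leq \kappa A_j+\beta w_{\max}$; one extra application of the dynamics using Assumption \ref{ass: f Lip} and $\|\pi_i(x)\|\leq L_\pi\|x\|+\bar\pi_0$ gives $A_{j+1}\leq M_1 A_j + M_2$ with $M_2 = L_f(1+L_\pi)\beta w_{\max}+L_f\bar\pi_0+L_f w_{\max}$. Let $j_1<\dots<j_K$, $K\leq \mathbb M$, enumerate the failed batches. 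The ``peaks'' $A_{j_k+1}$ satisfy $A_{j_k+1}\leq M_1\bigl[\gamma A_{j_{k-1}+1}+\beta w_{\max}/(1-\gamma)\bigr]+M_2$, which telescopes to $A_{j_K+1}\leq M_1^{\mathbb M}\|x_0\|+\tfrac{M_1^{\mathbb M}}{M_1-1}\bigl(M_1\beta w_{\max}/(1-\gamma)+M_2\bigr)$. Between two consecutive peaks the state contracts geometrically, yielding $\sum_j A_j \leq \tfrac{1}{1-\gamma}\sum_k A_{j_k+1}+\tfrac{J\beta w_{\max}}{1-\gamma}$. Plugging into the per-batch bound produces the claimed $l_1$ inequality with $\alpha_1=\tfrac{\kappa}{(1-\rho)(1-\gamma)}$, $\alpha_2=\alpha_1\tfrac{M_1}{M_1-1}$ (from the telescoped geometric sum over failed batches), and $\alpha_3$ emerging after repackaging $M_1\beta w_{\max}/(1-\gamma)+M_2$ in terms of the additive coefficients on $\beta w_{\max}$ and $\bar\pi_0$.

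\textbf{$l_2$ version and main obstacle.} The $l_2$ inequality follows from the same program applied to $\|x_t\|^2$, using $(a+b)^2\leq 2a^2+2b^2$ at each recursion step and the geometric sums $\sum\rho^{2s}$; the extra constants are absorbed into the $O(\cdot)$ and the exponents of $M_1$ simply double, giving the stated $(L_f(1+L_\pi)\kappa)^{2\mathbb M}$ factor. The main obstacle is the bookkeeping in the iteration over batches: one must verify that the worst-case placement of the failed batches (clustered at the end, where no subsequent successful batch can dampen them) still produces the advertised $\tfrac{M_1}{M_1-1}$ prefactor rather than a weaker $(\mathbb M+1)M_1^{\mathbb M}$, and that back-to-back failed batches (where $j_{k+1}=j_k+1$ so no contraction occurs in between) do not inflate the telescoped constant beyond what $\alpha_2,\alpha_3$ allow.
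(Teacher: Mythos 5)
Your proof follows essentially the same route as the paper's: a per-episode geometric bound from the ISS certificate, an episode-boundary recursion distinguishing failed from successful batches with blow-up factor $L_f(1+L_\pi)\kappa$ per failure and contraction $\kappa\rho^\tau$ between failures, and a telescoped sum over the at most $\mathbb M$ failed batches. The concerns you flag at the end are resolved exactly as you anticipate: the paper simply upper-bounds the contraction power between consecutive break activations by $1$ (which covers back-to-back failures, and also fixes the stray $\gamma^1$ in your peak-to-peak recursion, harmless since it is only ever bounded above by $1$), and the geometric sum $\sum_{s\le \mathbb M}(L_f(1+L_\pi)\kappa)^s$ yields the advertised $\frac{L_f(1+L_\pi)\kappa}{L_f(1+L_\pi)\kappa-1}$ prefactor regardless of where the failures are placed.
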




Theorem \ref{thm: finite gain stability} indicates that Algorithm \ref{alg: exp3 iss} can guarantee bounded states despite unstabilizing controllers in the initial controller pool, which is in contrast with (batch-based) Exp3.

The bound in Theorem \ref{thm: finite gain stability} scales as $O((L_f (1+L_{\pi}) \kappa)^{\mathbb M}+T)$.
The exponential dependence on~$\mathbb M$ can be intuitively explained as follows: since the candidate controllers are black boxes, we must try each controller in $\Pc_0-\mathbb B_0$ at least once to de-activate them.
This may result in exponential growth if we try the controllers in $\Pc_0-\mathbb B_0$ consecutively and these controllers are unstable.
Further, since $\mathbb M$ does not depend on the horizon $T$, the dependence of $\frac{1}{T}\sum_{t=0}^T \|x_t \|$  on $\mathbb M$ will diminish for large enough $T$. 
It is future work to consider non-black-box candidate controllers and leverage the controller structures to reduce the exponential term. 



More specifically, 
when the number of batches $J=o(T)$, and when $T$ goes to infinity, the average $l_1$ norm of the state converges to $\frac{1}{T}\sum_{t=0}^T \|x_t \|\to \beta w_{\max}$. Notice that this is the same state bound achieved by implementing an E-ISS stabilizing controller defined in Definition \ref{def: exp ISS} from the beginning. This suggests that, in the long run, our algorithm can almost recover the performance of the E-ISS stabilizing controllers despite testing unstabilizing controllers at the beginning.


Next, we provide a regret guarantee for our algorithm.
\vspace{-0.2cm}
\begin{theorem}[Policy regret bound]\label{thm: policy regret bdd}
When $\tau \geq \frac{\log(2\sqrt 2 \kappa)}{-\log \rho}$ and $\eta_j=\eta$,  Exp3-ISS's  regret satisfies
	\begin{align*}
		\textup{PolicyRegret}& \leq \alpha_4 \eta NT + (\alpha_5 \eta N\gamma^{4\mathbb M} +\alpha_6  \gamma^{2\mathbb M})\textup{poly}(\|x_0\|,  \bar \pi_0) + \tau\log N/\eta +\alpha_7 J
	\end{align*}
where $\gamma = L_f(1+L_{\pi})\kappa$, $\alpha_4, \dots, \alpha_7$ are  polynomials of $L_f, L_{c1}, L_{c2}, c_0, L_{\pi}, \kappa,\beta w_{\max}, \frac{1}{1-\rho}, \frac{1}{1-2^{3/4}\kappa \rho^{\tau}} $.
\end{theorem}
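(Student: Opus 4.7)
My plan is to fix the optimal comparator $i^*\in\B$, observe that $\pi_{i^*}$ is E-ISS and therefore never fails the stability certificate in Line~6 of Algorithm~\ref{alg: exp3 iss} (so $i^*\in\Pc_j$ for every $j$), and then split the regret into a standard batched-Exp3 regret against $i^*$ plus a ``memory-decoupling'' term that converts the counterfactual batched loss of $i^*$ into the actual trajectory cost $J_T(\pi_{i^*})$. Concretely, define the counterfactual per-batch loss $\phi_j(i):=\frac{1}{\tau}\sum_{t=t_j}^{t_{j+1}-1} c_t(y_t^{(i)},\pi_i(y_t^{(i)}))$, where $y_t^{(i)}$ is the trajectory obtained by running $\pi_i$ throughout batch $j$ starting from the algorithm's own state $x_{t_j}$. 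Since $y_t^{(I_j)}$ coincides with the algorithm's trajectory on batch $j$, $J_T(\A)=\tau\sum_j \phi_j(I_j)$, and
\begin{align*}
\E[J_T(\A)]-J_T(\pi_{i^*}) \;=\; \underbrace{\tau\,\E\!\Bigl[\textstyle\sum_j(\phi_j(I_j)-\phi_j(i^*))\Bigr]}_{(B)} \;+\; \underbrace{\tau\,\E\!\Bigl[\textstyle\sum_j\phi_j(i^*)\Bigr]-J_T(\pi_{i^*})}_{(C)}.
\end{align*}

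For term~(B), I would apply the standard exponential-weights potential analysis of Exp3 to the importance-weighted estimators $\tilde g_j$ of Lines~11--12, using that $i^*\in\Pc_j$ throughout so that the comparator always lies in the active pool. This yields a bound of the form $\tau\eta\sum_j\sum_i\phi_j(i)^2+\tau\log N/\eta$. I would bound $\phi_j(i)^2$ via Assumption~\ref{ass: ct} in terms of the state/action magnitudes along $y_t^{(i)}$, which in turn are controlled by the one-step Lipschitz growth $L_f(1+L_\pi)$ from $x_{t_j}$ across a batch. The key step is to split the $J$ batches into $O(\mathbb M)$ ``early'' batches in which some controller in $\Pc_0\setminus\B$ is still active (each contributing at most $O(\gamma^{4\mathbb M})\cdot\textup{poly}(\|x_0\|,\bar\pi_0)$ by Theorem~\ref{thm: finite gain stability}) and ``late'' batches where only E-ISS controllers remain (each contributing $O((\beta w_{\max})^4)$). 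Summing and using $\tau J=O(T)$ produces the $\alpha_4\eta NT$, $\alpha_5\eta N\gamma^{4\mathbb M}\textup{poly}(\|x_0\|,\bar\pi_0)$, and $\tau\log N/\eta$ contributions.

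For term~(C), I would invoke $\delta$-ES of $\pi_{i^*}$ to compare $y_t^{(i^*)}$ (started from $x_{t_j}$) with the true $i^*$-trajectory $\tilde x_t$ (started from $x_0$): within batch $j$, $\|y_t^{(i^*)}-\tilde x_t\|\leq \kappa\rho^{t-t_j}\|x_{t_j}-\tilde x_{t_j}\|$. The per-step cost gap is controlled by Assumption~\ref{ass: ct}; summing the geometric series $\sum_{t=t_j}^{t_{j+1}-1}\rho^{t-t_j}\leq 1/(1-\rho)$ gives a per-batch gap of order $\|x_{t_j}-\tilde x_{t_j}\|\cdot(\text{state magnitude})$. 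Bounding $\|x_{t_j}\|$ by Theorem~\ref{thm: finite gain stability} and $\|\tilde x_{t_j}\|$ by the E-ISS of $i^*$, the same early/late split contributes $O(\gamma^{2\mathbb M})\textup{poly}(\|x_0\|,\bar\pi_0)$ from the $O(\mathbb M)$ early batches and $O(1)$ per late batch, yielding $\alpha_6\gamma^{2\mathbb M}\textup{poly}(\|x_0\|,\bar\pi_0)+\alpha_7 J$.

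The hardest part will be the bookkeeping that justifies the early/late split. I must argue that at most $\mathbb M$ batches can actually involve a controller in $\Pc_0\setminus\B$ (since any such controller that triggers Line~6 is permanently removed from $\Pc_j$), and that once all of them have been deactivated, the contraction $\kappa\rho^\tau<1$ forces $\|x_{t_j}\|$ back into the $O(\beta w_{\max})$ E-ISS neighborhood within $O(1)$ further batches, so that the $\gamma^{\mathbb M}$-scale state bounds cannot propagate into a $T$-dependent factor. The remaining Lipschitz-and-state-magnitude bookkeeping across a single batch (based on the $L_f(1+L_\pi)$ per-step growth of $\|y_t^{(i)}\|$ from $\|x_{t_j}\|$, and the definition of $\gamma$) should then absorb the prefactors into the constants $\alpha_4,\dots,\alpha_7$ exactly as stated.
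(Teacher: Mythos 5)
Your proposal follows essentially the same route as the paper: your terms (B) and (C) are exactly the paper's decomposition into an auxiliary (batched-Exp3) regret and the auxiliary-vs-policy-regret gap, your potential-function analysis with trajectory-dependent second moments of $\phi_j$ matches the paper's treatment of $\sum_j g_j^2$ via the cumulative state bounds, and your $\delta$-ES comparison of the within-batch counterfactual against the true $\pi_{i^*}$ trajectory is precisely the paper's Lemma~\ref{lem: difference btw PoR and PsR}. The only small imprecision is the claim that the state returns to the $O(\beta w_{\max})$ neighborhood ``within $O(1)$ further batches'' after the last deactivation --- it actually takes $O(\mathbb M)$ batches, but since the inter-batch contraction $\kappa\rho^\tau<1$ makes those contributions a geometric series summing to $O(\gamma^{4\mathbb M})\,\textup{poly}(\|x_0\|,\bar\pi_0)$, the stated bound is unaffected.
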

\vspace{-0.2cm}

\vspace{-0.3cm}
\begin{corollary}[Regret bound order]\label{cor: regret bdd order}
    Let $\eta=O(\frac{1}{N^{2/3} T^{1/3}})$
    and
    $\tau=\max(T^{1/3}N^{-1/3}, \frac{\log(2\sqrt 2 \kappa)}{-\log \rho})$.
    When $T\geq N$,  we have 

    \vspace{-15pt}
    $$\textup{PolicyRegret}\leq   \tilde O( N^{1/3}  T^{2/3})+ \exp(O(\mathbb M)), $$
 where $\tilde O(\cdot)$ hides a $\log(N)$ factor.
\end{corollary}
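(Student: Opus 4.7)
The plan is to prove Corollary \ref{cor: regret bdd order} by direct substitution of the chosen $\eta$ and $\tau$ into the bound of Theorem \ref{thm: policy regret bdd}, treating every quantity not involving $T, N, \mathbb M$ as a problem-dependent constant that can be folded into the $\tilde O$ or $\exp(O(\cdot))$ notation. The core observation is that the four terms in Theorem \ref{thm: policy regret bdd} split cleanly: two terms scale polynomially in $T$ and $N$ (and will together give the $\tilde O(N^{1/3}T^{2/3})$), while the remaining two carry the factor $\gamma^{c\mathbb M}$ and contribute only to the $\exp(O(\mathbb M))$ additive term once we use the assumption $T \geq N$.

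First I would substitute $\eta = \Theta(N^{-2/3}T^{-1/3})$ into the exploration term $\alpha_4 \eta N T$, which yields $\alpha_4 N^{1/3} T^{2/3}$. Next, for the exploitation term $\tau \log N/\eta$, plugging in $\tau = T^{1/3}N^{-1/3}$ (which is the active branch of the $\max$ for $T$ large enough compared to the horizon-independent constant $\log(2\sqrt{2}\kappa)/(-\log\rho)$) gives $\tau\log N/\eta = N^{1/3}T^{2/3}\log N$, i.e., $\tilde O(N^{1/3}T^{2/3})$. For the batch overhead $\alpha_7 J$, I invoke Lemma \ref{lem: bdd on J} to get $J \leq \lceil (T-\mathbb M)/\tau\rceil + \mathbb M \leq N^{1/3}T^{2/3} + \mathbb M + 1$, which again falls under $\tilde O(N^{1/3}T^{2/3}) + O(\mathbb M)$. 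The two ``memory'' terms $\alpha_5 \eta N \gamma^{4\mathbb M}\mathrm{poly}(\|x_0\|,\bar\pi_0)$ and $\alpha_6 \gamma^{2\mathbb M}\mathrm{poly}(\|x_0\|,\bar\pi_0)$ need the assumption $T\geq N$: under it, $\eta N = N^{1/3}/T^{1/3} \leq 1$, so the first is bounded by $\alpha_5 \gamma^{4\mathbb M}\mathrm{poly}(\|x_0\|,\bar\pi_0) = \exp(O(\mathbb M))$, and the second is directly $\exp(O(\mathbb M))$.

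The only thing to check carefully is that the case $\tau = \log(2\sqrt2\kappa)/(-\log\rho)$ (when this constant dominates $T^{1/3}N^{-1/3}$) also yields the same order: here $\tau$ is an absolute constant, so $\tau\log N/\eta = O(N^{2/3}T^{1/3}\log N)$ and $J = O(T)$, both of which are dominated by $N^{1/3}T^{2/3}$ whenever $T \geq N$. Summing the four contributions and absorbing the problem-dependent constants $\alpha_4,\ldots,\alpha_7$, $L_f,L_\pi,\kappa,\beta w_{\max}$ etc. into the $\tilde O$ and $\exp(O(\cdot))$ notation yields the claimed $\tilde O(N^{1/3}T^{2/3}) + \exp(O(\mathbb M))$. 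There is no real obstacle in this corollary beyond bookkeeping: the only step requiring any thought is verifying that the constraint $T\geq N$ is strong enough to suppress the $\eta N\gamma^{4\mathbb M}$ term into the additive $\exp(O(\mathbb M))$ instead of letting it couple to $T$, which is why the assumption $T\geq N$ appears explicitly in the statement.
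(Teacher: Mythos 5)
Your proposal is correct and matches the paper's own (one-line) proof: the paper likewise just substitutes the stated $\eta$ and $\tau$ into Theorem \ref{thm: policy regret bdd} and uses Lemma \ref{lem: bdd on J} to get $J\leq O(T/\tau+\mathbb M)$, absorbing constants into $\tilde O(\cdot)$ and $\exp(O(\mathbb M))$. Your additional check of the constant branch of the $\max$ defining $\tau$ (where the conclusion follows because that branch forces $T=O(N)$) is a detail the paper does not spell out, but it is consistent with the claim.
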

\vspace{-0.2cm}


Corollary \ref{cor: regret bdd order} shows the order of our regret bound under proper conditions. The first term $ \tilde O(N^{1/3} T^{2/3})$ is common in the policy regret bound of online bandit learning with memory  and has been  shown to be the optimal regret order \citep{dekel2014bandits}. Since online control is closely related to online learning with memory, $  \tilde O( N^{1/3}T^{2/3})$ is likely to also be the optimal regret order for our online control setting. Obtaining a formal lower bound is our ongoing work.

Notice that the exponential term $\exp(O(\mathbb M))$ does not depend on the horizon $T$, so for large enough~$T$, our average regret bound $\text{PolicyRegret}/T$ scales as $ O(1/T^{1/3})$, which diminishes to 0. This indicates that our algorithm can almost recover the optimal  performance of the controllers in~$\mathbb B$ after learning long enough. 
It is also worth mentioning that such an exponential term appears in other online control settings without a stabilization assumption.
For example, in \citep{chen2021black}, the exponential term depends on system dimensionality in a setting with {linear} systems and \textit{linear} controllers,
while our exponential term
depends on the number of unstabilizing controllers since we do not have  knowledge or restrictions on the controller structures. It is our future work to also consider controller structures to improve the exponential term for nonlinear systems.

\vspace{5pt}

\nbf{Proof sketch for Theorem \ref{thm: policy regret bdd}.}
Our proof consists of two parts: we first bound an ``auxiliary regret'' of our algorithm, and then bound the difference between the auxiliary regret and the policy regret. 

\vspace{-0.2cm}
\begin{lemma}[Auxiliary regret bound]\label{lem: pseudo regret}
	Define the \emph{auxiliary regret} of Algorithm \ref{alg: exp3 iss} as
\[
\textstyle
\textup{AuxRegret}(\A)=\tau\E_{(I_j)_{j\geq 0}}\sum_{j=0}^{J-1}g_j(I_j;I_{j-1:0})- \min_{k\in \B}\tau\E_{(I_j)_{j\geq 0}} \sum_{j=0}^{J-1}g_j(k;I_{j-1:0}),
\]
where $I_j$ is seleted by algorithm $\A$. 
		Under the conditions in Theorem \ref{thm: policy regret bdd}, we have	$$\textup{AuxRegret}\leq \alpha_4 \eta N T + \alpha_5  \eta N(L_f(1+L_{\pi})\kappa)^{4\mathbb M} \textup{poly}(\|x_0\|,  \bar \pi_0)  +\tau 
			\log N/\eta. $$
	\end{lemma}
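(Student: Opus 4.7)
The plan is to adapt the standard Exp3 regret analysis for non-negative losses to our setting, with the three deviations being: the pool $\Pc_j$ shrinks over time; the losses $g_j$ are not uniformly bounded; and the losses depend on the entire history $I_{j-1:0}$. The argument naturally splits into an Exp3 potential step and a variance-bounding step, with the latter being the load-bearing part.

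\textbf{Step 1 (Exp3 potential inequality).} I would introduce the potential $\Phi_j = \sum_{i \in \Pc_{j+1}} \exp(-\eta \tilde G_j(i))$, with $\Phi_0 = N$. The one-step progress $\log(\Phi_{j+1}/\Phi_j)$ is bounded using the pointwise inequality $e^{-x} \leq 1 - x + x^2/2$ for $x \geq 0$ (valid since losses are non-negative, so no clipping is needed) together with $\log(1+y) \leq y$, giving $\log \Phi_{j+1} - \log \Phi_j \leq -\eta \sum_i p_j(i)\tilde g_j(i) + (\eta^2/2)\sum_i p_j(i)\tilde g_j(i)^2$. For any $k \in \B$, Definitions~\ref{def: exp ISS}--\ref{def: incremental global exp stable} guarantee $k$ passes the ISS certificate at every batch, so $k \in \Pc_j$ for all $j$, and $\log \Phi_J \geq -\eta \tilde G_{J-1}(k) - \log N$. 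Using $\E[\sum_i p_j(i)\tilde g_j(i)] = g_j(I_j)$ and $\E[\tilde G_{J-1}(k)] = \sum_j g_j(k)$, this yields
\[
\E\sum_j g_j(I_j) - \sum_j g_j(k) \leq \frac{\log N}{\eta} + \frac{\eta}{2}\,\E\sum_{j}\sum_{i \in \Pc_{j+1}} g_j(i;\cdot)^2.
\]

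\textbf{Step 2 (Bounding the second moment).} The next step is to bound $\tau \sum_j \sum_i g_j(i;\cdot)^2$. By Cauchy--Schwarz, $\tau g_j(i)^2 \leq \sum_{t \in \text{batch}_j} c_t^{(i)}(x_t^{(i)}, \pi_i(x_t^{(i)}))^2$, where $\{x_t^{(i)}\}$ is the counterfactual trajectory starting from the \emph{actual} state $x_{t_j}$ under $\pi_i$. Assumption~\ref{ass: ct} gives $c_t^{(i)} = O(\|x_t^{(i)}\|^2 + \bar\pi_0^2 + c_0)$, and because a batch lasts at most $\tau$ steps and $f, \pi_i$ are Lipschitz, $\|x_t^{(i)}\| \leq (L_f(1+L_\pi))^\tau(\|x_{t_j}\| + \bar\pi_0 + w_{\max})$; here $\tau$ is a fixed algorithmic constant, so this factor is a polynomial in the problem parameters (absorbed into $\alpha_4, \alpha_5$). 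Hence $\tau g_j(i)^2 \leq \mathrm{poly}\cdot(\|x_{t_j}\|^4 + \bar\pi_0^4 + c_0^2)$, and summing gives $\tau \sum_{j,i} g_j(i)^2 \leq N \cdot \mathrm{poly}\cdot\bigl(\sum_j \|x_{t_j}\|^4 + J(\bar\pi_0^4 + c_0^2)\bigr)$.

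\textbf{Step 3 (Splitting transient vs.\ stationary).} The core task is then to produce an $l_4$ state bound of the form $\sum_t \|x_t\|^4 \leq O(T) + O(\gamma^{4\mathbb M})$ with no cross term $\gamma^{2\mathbb M} T$. I would adapt the proof of Theorem~\ref{thm: finite gain stability}: let $t^\star$ be the last stage at which a controller in $\Pc_0 \cap \B_0$ is deactivated (so $t^\star \leq \mathbb M \tau$ since there are at most $\mathbb M$ such deactivations, each consuming at most one batch). For $t \leq t^\star$, the dynamics permit only $\gamma$-factor growth per deactivation, giving $\max_{t \leq t^\star}\|x_t\| = O(\gamma^{\mathbb M}(\|x_0\| + \bar\pi_0 + \beta w_{\max}))$, and $\sum_{t \leq t^\star}\|x_t\|^4 \leq t^\star \cdot (\gamma^{\mathbb M}\cdots)^4 = O(\gamma^{4\mathbb M})$. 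For $t > t^\star$, the pool contains only E-ISS controllers, so $\|x_t\| \leq \kappa \rho^{t-t^\star}\|x_{t^\star}\| + \beta w_{\max}$ (via the same batch-to-batch contraction used for Theorem~\ref{thm: finite gain stability}), and summing $\|x_t\|^4$ splits into a geometrically decaying piece bounded by $O(\gamma^{4\mathbb M})$ and a stationary piece bounded by $O((\beta w_{\max})^4 T)$. Substituting back into Step~2 yields $\tau\sum_{j,i} g_j(i)^2 \leq \alpha_4' NT + \alpha_5' N \gamma^{4\mathbb M}\,\mathrm{poly}(\|x_0\|, \bar\pi_0)$, and multiplying the Step~1 inequality by $\tau$ gives exactly the form claimed in Lemma~\ref{lem: pseudo regret}.

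\textbf{Main obstacle.} The hard part is Step~3: upgrading the $l_2$ state bound of Theorem~\ref{thm: finite gain stability} to an $l_4$ bound while keeping the $T$-linear term free of any $\gamma^{\mathbb M}$ factor. A naive Cauchy--Schwarz like $\sum \|x_t\|^4 \leq \max \|x_t\|^2 \sum \|x_t\|^2$ produces a spurious $\gamma^{2\mathbb M} T$ cross-term; avoiding it requires the explicit transient/stationary split above, exploiting that deactivations happen at most $\mathbb M$ times and that after $t^\star$ the ISS certificate gives exponential decay from the peak value. Everything else---the potential calculation, the Cauchy--Schwarz on $g_j$, the Lipschitz reduction of the counterfactual cost---is standard once the $l_4$ state bound is in hand.
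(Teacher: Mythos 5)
Your high-level architecture (Exp3 potential inequality plus a second-moment bound driven by state bounds) matches the paper's, and Step 1 is fine, but Steps 2 and 3 each contain a genuine gap. In Step 2 you bound the within-batch counterfactual states by raw Lipschitz growth, $\|x_t^{(i)}\|\leq(L_f(1+L_\pi))^{\tau}(\|x_{t_j}\|+\bar\pi_0+w_{\max})$, and then claim this factor can be absorbed into $\alpha_4,\alpha_5$ because ``$\tau$ is a fixed algorithmic constant.'' It is not: to obtain Corollary~\ref{cor: regret bdd order} one must take $\tau=\Theta(T^{1/3}N^{-1/3})$, so $(L_f(1+L_\pi))^{4\tau}=\exp(\Theta(T^{1/3}))$ and your bound is vacuous. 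The constants in Theorem~\ref{thm: policy regret bdd} are allowed to depend on $\tau$ only through $\frac{1}{1-2^{3/4}\kappa\rho^{\tau}}$, which stays bounded as $\tau$ grows. The missing idea is that the ISS certificate in Line~6 is checked at \emph{every stage} inside the batch and the batch breaks at the first violation, so every state up to the break satisfies $\|x_t\|\leq\kappa\rho^{t-t_j}\|x_{t_j}\|+\beta w_{\max}$ for \emph{any} selected controller, stabilizing or not (this is Lemma~\ref{lem: bdd state in one episode by start of episode}); this, not Lipschitz growth, is what makes $\sum_{t}\|\mathring x_t\|^4\leq\frac{8\kappa^4}{1-\rho^4}\|x_{t_j}\|^4+8\beta^4w_{\max}^4\tau$ and hence $\tau g_j^2(i)\leq\mathrm{poly}\cdot(\|x_{t_j}\|^4+\|x_{t_j}\|^2)+\mathrm{poly}\cdot\tau$ with $\tau$-free constants.

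In Step 3 the claim $t^\star\leq\mathbb M\tau$ is false: deactivations occur only when a controller in $\mathcal B_0$ is sampled \emph{and} fails the certificate, which can happen arbitrarily late since the sampling is random and a bad controller may pass the certificate for several batches before failing. Moreover, even granting a bound on $t^\star$, the estimate $\sum_{t\leq t^\star}\|x_t\|^4\leq t^\star\cdot(\gamma^{\mathbb M}\cdots)^4$ reintroduces exactly the cross term you set out to avoid whenever $t^\star$ scales with $T$ or $\tau$. The correct mechanism (Lemmas~\ref{lem: bdd xtj btw two breaks}--\ref{lem: bdd on sum xtj}) is that between consecutive \textbf{Break} activations the batch-start states contract geometrically with ratio $\gamma_0=2\kappa^2\rho^{2\tau}<1$ (this is where the condition $\tau\geq\frac{\log(2\sqrt2\kappa)}{-\log\rho}$ enters), while each of the at most $\mathbb M$ activations multiplies the state by at most $\gamma=L_f(1+L_\pi)\kappa$; summing the resulting geometric series over batch-start states yields $\sum_j\|x_{t_j}\|^4\leq O(\gamma^{4\mathbb M})\,\mathrm{poly}(\|x_0\|,\bar\pi_0)+O(\beta^4w_{\max}^4)J$ with no $T$-dependent cross term, and combining with the per-batch certificate bound gives the claimed result.
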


\begin{lemma}[Difference between auxiliary regret and policy regret]\label{lem: difference btw PoR and PsR}
	Under the conditions in Theorem \ref{thm: policy regret bdd}, we have
	$$\textup{PolicyRegret}\leq \textup{AuxRegret} +  \alpha_6 (L_f(1+L_{\pi})\kappa)^{2\mathbb M} \textup{poly}(\|x_0\|, \bar \pi_0) + \alpha_{7} J.$$
	\end{lemma}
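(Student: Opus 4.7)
\textbf{Proof proposal for Lemma \ref{lem: difference btw PoR and PsR}.} By definition of $g_j$, the actual cost incurred by the algorithm satisfies $J_T(\A)=\tau\sum_{j=0}^{J-1} g_j(I_j;I_{j-1:0})$ identically (in each sample path), so the two ``algorithm'' terms in $\textup{PolicyRegret}$ and $\textup{AuxRegret}$ coincide. Letting $i^{\star}\in\argmin_{i\in\B}J_T(\pi_i)$, the plan is to show
\begin{equation*}
\textup{PolicyRegret}-\textup{AuxRegret}\;\leq\;\tau\,\E\sum_{j=0}^{J-1} g_j(i^{\star};I_{j-1:0})\;-\;J_T(\pi_{i^{\star}}),
\end{equation*}
so it suffices to bound the gap between (i) running $\pi_{i^{\star}}$ within each batch starting from the algorithm's state $x_{t_j}^{\mathrm{alg}}$, and (ii) running $\pi_{i^{\star}}$ throughout from $x_0$.

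The natural approach is a batch-wise coupling argument. Let $\bar x_t$ be the trajectory under $\pi_{i^{\star}}$ from $x_0$ with the true noise, and within each batch $j$ let $\tilde x_t^{(j)}$ be the trajectory under $\pi_{i^{\star}}$ starting from $x_{t_j}^{\mathrm{alg}}$ at time $t_j$ with the same noise. Since $i^{\star}\in\B$ satisfies $\delta$-ES with parameters $(\kappa,\rho)$, for $t_j\leq t<t_{j+1}$,
\begin{equation*}
\|\tilde x_t^{(j)}-\bar x_t\|\;\leq\;\kappa\rho^{\,t-t_j}\,\|x_{t_j}^{\mathrm{alg}}-\bar x_{t_j}\|.
\end{equation*}
Combining this with Assumption \ref{ass: ct} and the $L_{\pi}$-Lipschitz property of $\pi_{i^{\star}}$ gives a pointwise cost gap of the form $\bigl[L'_{c1}(\|\tilde x_t^{(j)}\|+\|\bar x_t\|)+L'_{c2}\bigr]\cdot\|\tilde x_t^{(j)}-\bar x_t\|$. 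Summing the geometric factor $\rho^{t-t_j}$ over $t$ in the batch contributes $1/(1-\rho)$, and by the E-ISS of $\pi_{i^{\star}}$ we also have $\|\bar x_t\|\leq \kappa\|x_0\|+\beta w_{\max}$ and $\|\tilde x_t^{(j)}\|\leq \kappa\|x_{t_j}^{\mathrm{alg}}\|+\beta w_{\max}$. An AM-GM step then reduces the per-batch bound to the form $O\bigl(\|x_{t_j}^{\mathrm{alg}}\|^2\bigr)+O(1)$, modulo polynomial constants in $\kappa,L_{\pi},L_{c1},L_{c2},\bar\pi_0,\beta w_{\max},1/(1-\rho)$.

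Summing the per-batch bounds, the constant-per-batch part contributes $\alpha_7 J$, while the quadratic state part is controlled by the finite-gain $l_2$ stability already established in Theorem \ref{thm: finite gain stability}: since $\sum_{j=0}^{J-1}\|x_{t_j}^{\mathrm{alg}}\|^2\leq \sum_{t=0}^{T}\|x_t\|^2=O\bigl(\gamma^{2\mathbb M}\,\mathrm{poly}(\|x_0\|,\bar\pi_0)+\beta^2 w_{\max}^2(T+J)\bigr)$ with $\gamma=L_f(1+L_{\pi})\kappa$, the exponential portion is absorbed into $\alpha_6\gamma^{2\mathbb M}\mathrm{poly}(\|x_0\|,\bar\pi_0)$ while the $O(T+J)$ portion is absorbed into $\alpha_7 J$ (using $T\leq \tau J+O(\mathbb M)$ from Lemma \ref{lem: bdd on J}).

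\textbf{Main obstacle.} The key technical subtlety is obtaining the exponential factor $\gamma^{2\mathbb M}$ without a multiplicative $J$. A naive bound that first estimates $\|x_{t_j}^{\mathrm{alg}}\|\leq O(\gamma^{\mathbb M})$ in every batch and then sums over $j$ would yield $\gamma^{\mathbb M}J\cdot\mathrm{poly}$, which is far too weak. The trick is to keep the pointwise cost gap in its natural bilinear form (state magnitude $\times$ state difference), pay one factor of $\|x_{t_j}^{\mathrm{alg}}\|$ via AM-GM, and then appeal to the $l_2$ finite-gain stability of Exp3-ISS, in which the $\gamma^{2\mathbb M}$ appears without any batch-counting prefactor because $x_{t_j}^{\mathrm{alg}}$ decays geometrically once all controllers in $\mathcal B_0$ have been de-activated. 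Verifying that the cost Lipschitz constants and the per-batch counterfactual start $x_{t_j}^{\mathrm{alg}}$ are compatible with this argument (in particular, that the batch-boundary sum $\sum_j\|x_{t_j}^{\mathrm{alg}}\|^2$ is indeed dominated by $\sum_t\|x_t\|^2$) is the main bookkeeping step.
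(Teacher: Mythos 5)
Your decomposition is exactly the paper's: you split $\textup{PolicyRegret}-\textup{AuxRegret}$ into the gap between the per-batch counterfactual cost of $\pi_{i^\star}$ started from the algorithm's state $x_{t_j}$ and the true cost of $\pi_{i^\star}$ run from $x_0$, couple the two trajectories within each batch via $\delta$-ES, use the bilinear Lipschitz bound on the cost together with E-ISS bounds on the state magnitudes, and sum the geometric factor to get a per-batch contribution of order $\|x_{t_j}\|^2+\|\hat x_{t_j}\|^2+O(1)$. All of that matches the paper's proof.

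The gap is in the last summation step. You bound $\sum_{j=0}^{J-1}\|x_{t_j}\|^2$ by $\sum_{t=0}^{T}\|x_t\|^2$ and invoke the finite-gain $l_2$ bound of Theorem \ref{thm: finite gain stability}, which contains an additive $\beta^2 w_{\max}^2(T+J)$ term. You then claim the $T$ part can be absorbed into $\alpha_7 J$ via $T\leq \tau J+O(\mathbb M)$. But that absorption costs a factor of $\tau$, and $\alpha_7$ in Theorem \ref{thm: policy regret bdd} is not allowed to depend on $\tau$ (it is a polynomial in $L_f, L_{c1}, L_{c2}, c_0, L_\pi, \kappa, \beta w_{\max}, \frac{1}{1-\rho}, \frac{1}{1-2^{3/4}\kappa\rho^\tau}$ only). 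With $\tau=T^{1/3}N^{-1/3}$ as in Corollary \ref{cor: regret bdd order}, a term of order $\tau J\approx T$ in the lemma would make the final regret linear in $T$, destroying the $\tilde O(N^{1/3}T^{2/3})$ rate. The point of restricting to batch boundaries is precisely to avoid this factor of $\tau$: since there are only $J$ batch starts and they satisfy their own geometric recursion across batches (the states $x_{t_j}$ contract between consecutive \textbf{Break} activations and blow up by at most $\gamma=L_f(1+L_\pi)\kappa$ at each of the at most $\mathbb M$ activations), one gets directly $\sum_{j}\|x_{t_j}\|^2 \leq O\bigl(\gamma^{2\mathbb M}\,\mathrm{poly}(\|x_0\|,\bar\pi_0)\bigr)+O(\beta^2 w_{\max}^2 J)$ with no $T$ term; this is the paper's Lemma \ref{lem: bdd on sum xtj}, proved from Lemmas \ref{lem: bdd xt j+1 by xt j}--\ref{lem: bdd xtjs by tjs-1}. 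Replacing your appeal to the full-trajectory $l_2$ bound by this batch-boundary bound repairs the argument; the rest of your proof goes through as written.
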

\vspace{-0.2cm}

The proof of Theorem \ref{thm: policy regret bdd} follows by combining the bounds in Lemma \ref{lem: pseudo regret} and \ref{lem: difference btw PoR and PsR}.
The detailed proofs of the lemmas are deferred to ~\citep{Yingying2022Supp}. We only discuss some high-level ideas below.
First, auxiliary regret allows the regret benchmark to depend on the same history as that of our algorithm. It is simply called ``regret'' in the classical online learning setting when the cost does not depend on the history decisions. Therefore, we can borrow ideas from the regret bound proof for standard Exp3 to prove Lemma \ref{lem: pseudo regret}. However, standard Exp3 assumes uniformly bounded costs, while our problem suffers unbounded costs. 
To address this issue, we leverage the state bounds in Theorem \ref{thm: finite gain stability}. 
One technical contribution is that we bound the auxiliary regret by the bound on the total cost,   $\sum_j g_j(I_j, I_{j-1:0})$, instead of the uniform bound on $g_j(I_j, I_{j-1:0})$ as in the literature \citep{lin2022online,arora2012online}. This is because the uniform bound on the cost scales as $\exp(O(\mathbb M))$, so directly applying this uniform bound will lead to a regret bound of   order $\exp(O(\mathbb M))T^{2/3}$, which is much worse than our current bound $\tilde O(T^{2/3})+ \exp(O(\mathbb M))$. In fact, the uniform bound is not ideal in our case because we only suffer large states during the transient phase and enjoy small states after unstabilizing controllers are de-activated, which is also reflected in our  numerical results.



Second, Lemma \ref{lem: difference btw PoR and PsR} is the only lemma that utilizes   Definition \ref{def: incremental global exp stable}, which establishes how fast the current state `forgets' the history. When the current state does not depend on the history, the auxiliary regret and the policy regret are identical. Under Definition \ref{def: incremental global exp stable}, the current state forgets the history exponentially fast, so by having a long enough batch size, we can bound the difference between the auxiliary regret and the policy regret. Details are in the supplementary ~\citep{Yingying2022Supp}.

\vspace{-0.2cm}
\section{Numerical experiments}
\label{sec:experiments}
This section provides simulation results on a planar ``quadrotor'' illustrated in Figure \ref{fig:PVTOL}(a) \citep{underactuated}. We consider  state $(x,y,\theta, \dot x, \dot y, \dot \theta)$, where $ (x,y)$ denotes the position and $\theta$ denotes the angle, and  control inputs $(u_1, u_2)$ from the two propellers. The dynamics are
$ m \ddot x = -(u_1+u_2)\sin\theta, \  m \ddot y = -(u_1+u_2)\cos\theta-mg,  \ I \ddot \theta =r(u_1-u_2),$
where $m$ is the mass, $I$ is the moment of inertia, and $r$ is the arm length. Our task is to fly the quadrotor towards a target. 
We consider 81 proportional–derivative candidate controllers as in \citep{lee2010geometric}, whose parameters include  gains $(k_p, k_d, k_p^\theta, k_d^\theta)$  on the position and attitude, and estimations of $m, I, r$. We consider inaccurate estimation of $m$ to 
test the robustness of our algorithm.
More details on the setting are deferred to \citep{Yingying2022Supp} due to space limits.

Figure \ref{fig:PVTOL}(b-c) compare our Exp3-ISS with Exp3-batch in \citep{lin2022online} and Falsification-based Switching (FBS), which  focuses on the stability and does not optimize the cost  \citep{al2009switching}.
When comparing our algorithm with Exp3-batch, we can observe that Exp3-batch performs much worse than our algorithm in terms of both policy regret and the trajectories, with large spikes and fluctuations in the trajectory plot. 
When comparing our algorithm with FBS, we observe that, although FBS performs better than Exp3-ISS at the beginning, FBS generates a linearly increasing regret in expectation, which while Exp3-ISS enjoys regret sublinear in $T$.
This is because FBS ``settles'' on the first stabilizing controller it identifies and does not explore to find better controllers.
Therefore, unless nearly all of the controllers are unstabilizing, FBS
avoids high cost of exploration at the beginning.
However, since FBS essentially selects one stabilizing controller at random, linear regret is unavoidable unless FBS selects the optimal stabilizing controller by random chance.
Figure \ref{fig:PVTOL}(c) shows similar trends: though our algorithm generates larger distances  at the beginning, our distances quickly diminishes to be smaller than FBS after enough exploration.





\begin{figure}
\centering
   
    \begin{subfigure}[t]{0.19\textwidth}
        \raisebox{5mm}{%
        \includegraphics[width=\textwidth]{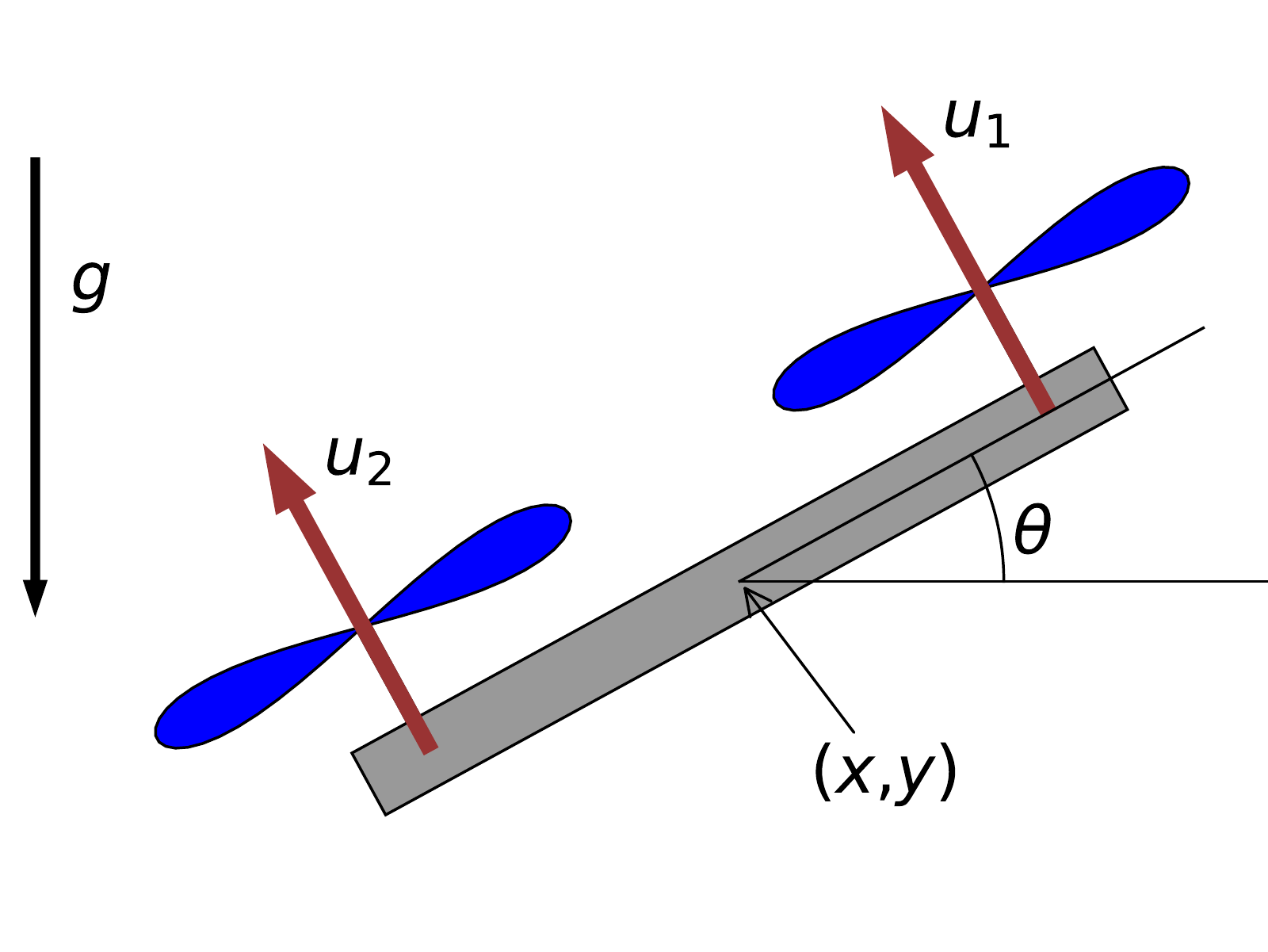}
        }
    	\caption{Planar quadrotor}
    	\label{fig:PVTOL-state}
	\end{subfigure}
    \hfill
    \begin{subfigure}[t]{0.29\textwidth}
     \includegraphics[height=3.2cm, trim={0 0 3.6cm 0}, clip] {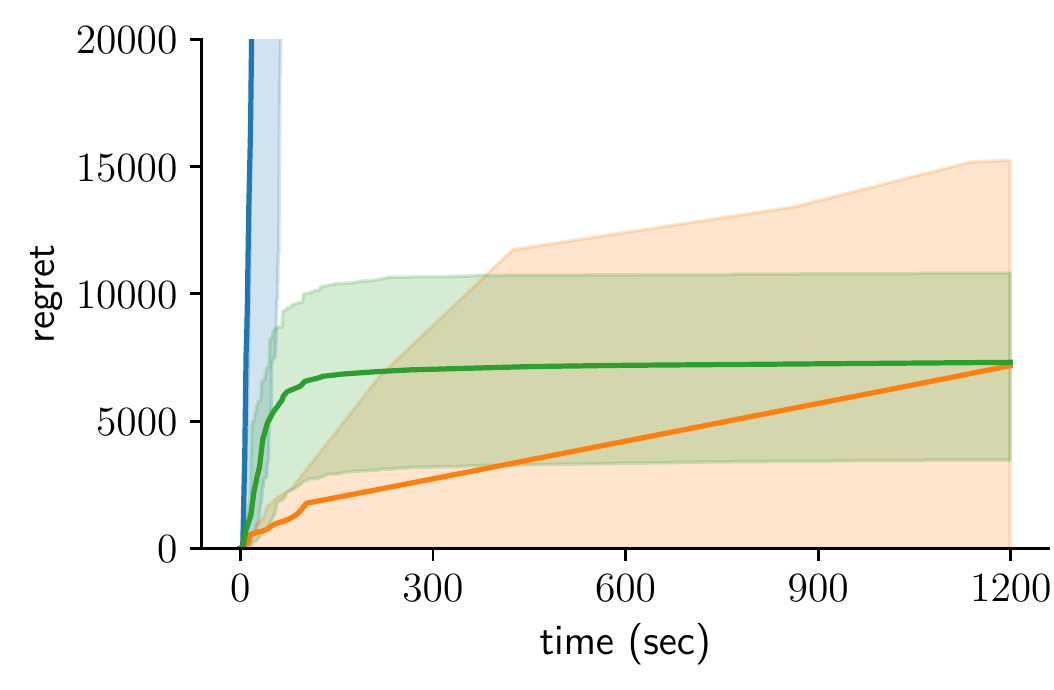}
    	\caption{Policy regret}
    	\label{fig:PVTOL-regret}
	\end{subfigure}
     \hfill
    \begin{subfigure}[t]{0.33\textwidth}
    \includegraphics[height=3.2cm]{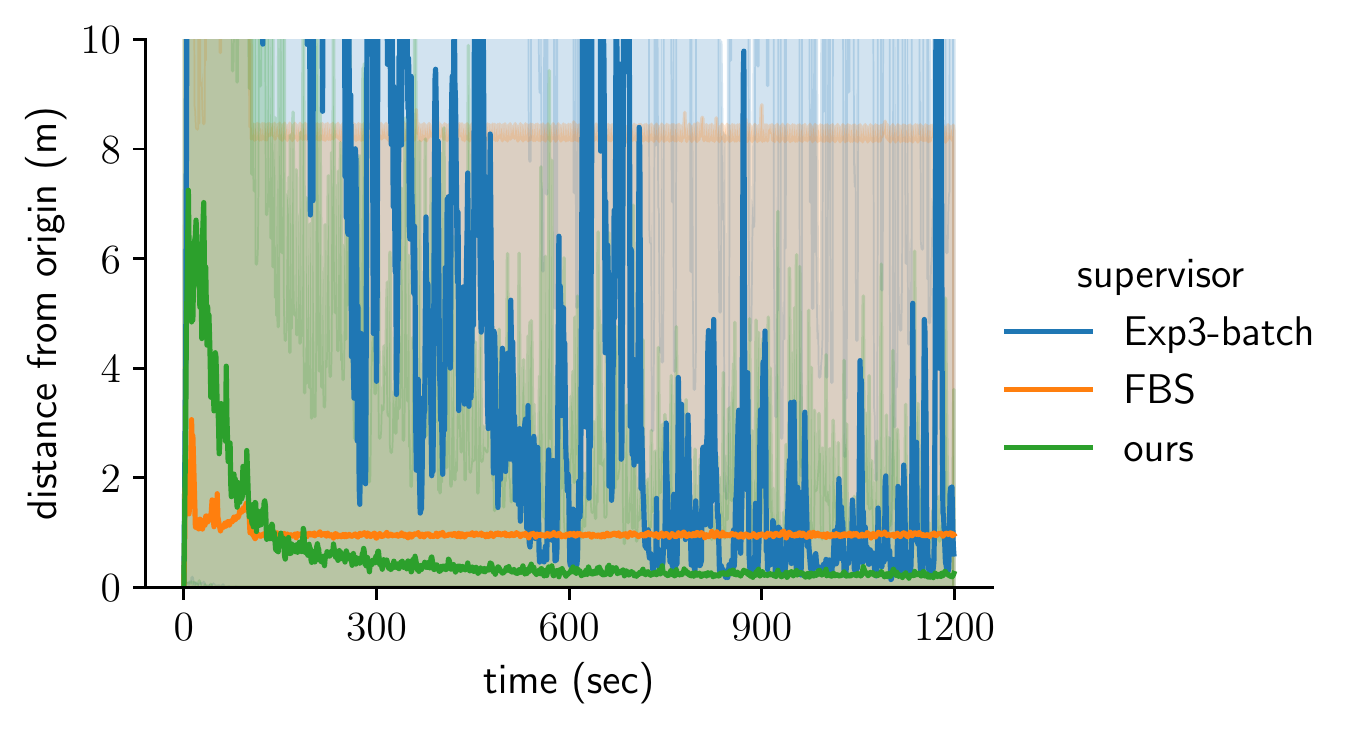}
    	\caption{Distance trajectories}
    	\label{fig:PVTOL-state}
	\end{subfigure}
	\caption{
	    Comparison of \Cref{alg: exp3 iss} with Exp3-batch in \citep{lin2022online} and Falsification-Based Switching (FBS) in \citep{al2009switching} on a simulated planar quadrotor. The solid lines represent the mean value over 100 trials. The shaded regions in (b) and (c) represent the 75\% percentile and the min/max over every trial, respectively.
	}
	\label{fig:PVTOL}
\end{figure}

\vspace{-6pt}
\section{Conclusion and future directions}
This paper proposes an online switching control algorithm by integrating the adversarial bandit algorithm Exp3 with a stability certification. Our algorithm stabilizes the system and provides sublinear policy regret despite the existence of unstabilizing candidate controllers. There are many interesting future directions, e.g., (i) discussing output feedback, where the stability certification in \citep{al2009switching} might be useful, (ii) considering an infinite or continuous policy pool by leveraging problem structure and continuity, (iii) fundamental regret and stability lower bounds for online switching control, (iv) time-varying dynamics where switching policies is necessary for stabilizing the system,
(v) relaxing the global exponential stability assumptions to local and/or asymptotic stability,
and (vi) combining switching-based control with estimation-based control as in multi-model adaptive control, etc.



\bibliography{citation4switching}

\newpage

\appendix

\noindent\textbf{\large Appendices}
\vspace{4pt}

 \paragraph{Notations for the appendices:} 	Denote $t_J=T+1$. Let $\one_S$ denote an indicator function on set $S$, i.e., $\one_S(x)=1$ if and only if $x\in S$.  In addition,  let $1\leq t_{j_1}, \dots, t_{j_M}\leq T$ denote the time indices when Line 6 of Algorithm \ref{alg: exp3 iss}  is activated, i.e., $\|x_{t_{j_s}}\| >\kappa \rho^{t_{j_s}-t_{j_s-1}} \|x_{t_{j_s-1}}\|+\beta w_{\max}$ for $1\leq s \leq M$.  Notice that $1\leq j_1, \dots, j_M\leq J-1$. Also notice that $j_s$ indicates that the previous episode $j_s-1$ terminates by the \textbf{Break} statement. For simplicity, we denote $j_0=0$ and $j_{M+1}=J$, thus,  $t_{j_0}=0$ and  $t_{j_{M+1}}=T+1$.  
 

\section{Proof of Lemma \ref{lem: bdd on J}}
\begin{proof}
	Suppose there are $M$ episodes that terminate when the condition in Line 6 is true in Algorithm \ref{alg: exp3 iss}, then the number of  episodes satisfies
	$J\leq \ceil{\frac{T-M}{\tau}}+M$.

	Notice that the upper bound $\ceil{\frac{T-M}{\tau}}+M$ increases with $M$ when $\tau \geq 1$. Further, notice that $M\leq \mathbb M$ by Definition \ref{def: B0 M0} and Line 6 of Algorithm \ref{alg: exp3 iss}. Therefore, we obtain $J\leq \ceil{\frac{T-\mathbb M}{\tau}}+ \mathbb M$.
\end{proof}

\section{Stability analysis: proof of Theorem \ref{thm: finite gain stability} and supportive lemmas}
In the following, we are going to prove not only Theorem \ref{thm: finite gain stability} but also finite-gain $l_4$ stability, that is,
\begin{equation}\label{equ: l4 stability}
	\sum_{t=0}^T \|x_t\|^4 = O((L_f (1+L_{\pi}) \kappa)^{4\mathbb M}\|x_0\|^4+ \beta^4w_{\max}^4(T+J) +(L_f (1+L_{\pi}) \kappa)^{4\mathbb M}(\beta^4w_{\max}^4+\bar \pi_0^4) ),
	\end{equation}
which will be useful for our regret analysis.

 To prove these finite-gain stability properties, we will first provide a sequence of supportive lemmas on the bounds of the states, which  will also be useful for the regret analysis.

 
 \subsection{Supportive lemmas on the bounds of states in Algorithm \ref{alg: exp3 iss}}
 In this subsection, we provide supportive lemmas on the bounds of the states generated by Algorithm \ref{alg: exp3 iss}. We will discuss the bounds in the $l_2$ norm, $l_2$ norm squared, and $l_2$ norm quartic, which will be used to prove finite-gain $l_1$, $l_2$, and $l_4$ stability, as well as the regret bounds.
 \begin{lemma}[Bounds on states in a single episode]\label{lem: bdd state in one episode by start of episode}
 	In Algorithm \ref{alg: exp3 iss}, at each episode $0\leq j \leq J-1$, for $t_j\leq t \leq t_{j+1}-1$, we have
 	$$\|x_t\|\leq \kappa \rho^{t-t_j}\|x_{t_j}\|+\beta w_{\max}\one_{(t>t_j)}.$$
 	
Consequently, we have
 	\begin{align*}
 		\sum_{t=t_j}^{t_{j+1}-1}\|x_t\|& \leq \frac{\kappa}{1-\rho}\|x_{t_j}\|+\beta w_{\max}(t_{j+1}-t_j-1)\\
 		\sum_{t=t_j}^{t_{j+1}-1}\|x_t\|^2& \leq \frac{2\kappa^2}{1-\rho^2}\|x_{t_j}\|^2+2\beta^2w_{\max}^2(t_{j+1}-t_j-1)\\
 		\sum_{t=t_j}^{t_{j+1}-1}\|x_{t}\|^4& \leq \frac{8\kappa^4}{1-\rho^4}\|x_{t_j}\|^4+8\beta^4w_{\max}^4(t_{j+1}-t_j-1)
 	\end{align*}
 \end{lemma}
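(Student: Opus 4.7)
The plan is to derive the per-step state bound directly from the algorithm's stability certificate and then sum a geometric series, together with a standard power-mean inequality, to get the three cumulative bounds.

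First, I would prove the pointwise bound $\|x_t\| \leq \kappa \rho^{t-t_j} \|x_{t_j}\| + \beta w_{\max}\,\one_{(t>t_j)}$ for $t_j \leq t \leq t_{j+1}-1$. For $t = t_j$ this is immediate from $\kappa \geq 1$ (assumed at the end of Section 2), since $\|x_{t_j}\| \leq \kappa\|x_{t_j}\|$ and the indicator vanishes. For $t_j < t \leq t_{j+1}-1$, the inner loop of Algorithm \ref{alg: exp3 iss} did not Break at step $t-1$, which by Line 6 means exactly that $\|x_t\| \leq \kappa \rho^{t-t_j}\|x_{t_j}\| + \beta w_{\max}$. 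This is really the whole substance of the pointwise bound: the episode termination rule is engineered to be precisely this inequality.

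Next, for the $l_1$ cumulative bound I would simply sum the pointwise bound and use $\sum_{s=0}^{\infty} \rho^s = 1/(1-\rho)$, together with the fact that the indicator contributes $t_{j+1}-t_j-1$ ones. For the $l_2$ and $l_4$ bounds, I would apply the elementary inequality $(a+b)^p \leq 2^{p-1}(a^p + b^p)$ (with $p=2$ and $p=4$) to the pointwise bound before summing, which produces
\[
\|x_t\|^p \leq 2^{p-1}\kappa^p \rho^{p(t-t_j)}\|x_{t_j}\|^p + 2^{p-1}\beta^p w_{\max}^p \one_{(t>t_j)}.
\]
Summing over $t_j \leq t \leq t_{j+1}-1$ and bounding $\sum_{s=0}^{\infty}\rho^{ps} = 1/(1-\rho^p)$ yields the stated coefficients $\tfrac{2\kappa^2}{1-\rho^2}$ and $\tfrac{8\kappa^4}{1-\rho^4}$ exactly.

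There is no real obstacle here: the lemma is essentially a restatement of the design of the stability certificate in Line 6, combined with geometric series bookkeeping. The only subtle point worth being explicit about is the boundary case $t = t_j$, where we must rely on $\kappa \geq 1$ rather than on the certificate, and the observation that the indicator $\one_{(t>t_j)}$ is exactly what encodes the fact that the certificate is only enforced from $t = t_j+1$ onward. After that, the $l_1$, $l_2$, $l_4$ inequalities all reduce to one line of geometric-series calculation.
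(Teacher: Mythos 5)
Your proof is correct and follows essentially the same route as the paper's: the pointwise bound is exactly the non-triggering of the Break condition in Line 6 (plus $\kappa\geq 1$ for $t=t_j$), and the cumulative bounds follow from $(a+b)^p\leq 2^{p-1}(a^p+b^p)$ and geometric-series summation, which is precisely how the paper obtains the constants $\tfrac{2\kappa^2}{1-\rho^2}$ and $\tfrac{8\kappa^4}{1-\rho^4}$.
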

 
 \begin{proof}
 	At episode $0\leq j\leq J-1$, no matter whether
 	Algorithm \ref{alg: exp3 iss} breaks at $t_{j+1}$ or not,  for $t_j\leq t \leq t_{j+1}-1$, we have
 	$$\|x_t\|\leq \kappa \rho^{t-t_j}\|x_{t_j}\|+\beta w_{\max} \one_{(t>t_j)},$$
 	which is the first statement of this lemma.
By  H\"{o}lder's inequality, we obtain the following inequalities.
	\begin{align*}
 		\|x_t\|^2 &\leq 2\kappa^2 (\rho^2)^{t-t_j}\|x_{t_j}\|^2 +2\beta^2 w_{\max}^2\one_{(t>t_j)}\\
 		\|x_t\|^4 &\leq 8\kappa^4 (\rho^4)^{t-t_j}\|x_{t_j}\|^4 +8\beta^4 w_{\max}^4\one_{(t>t_j)}
 	\end{align*}
 	Consequently, by summing the three inequalities above over $t=t_j,\dots, t_{j+1}-1$, we  obtain the following.
 	\begin{align*}
 			\sum_{t=t_j}^{t_{j+1}-1}\|x_t\|& \leq \frac{\kappa}{1-\rho}\|x_{t_j}\|+\beta w_{\max}(t_{j+1}-t_j-1)\\
 		\sum_{t=t_j}^{t_{j+1}-1}\|x_t\|^2& \leq \frac{2\kappa^2}{1-\rho^2}\|x_{t_j}\|^2+2\beta^2w_{\max}^2(t_{j+1}-t_j-1)\\
 		\sum_{t=t_j}^{t_{j+1}-1}\|x_t\|^4& \leq \frac{8\kappa^4}{1-\rho^4}\|x_{t_j}\|^4+8\beta^4w_{\max}^4(t_{j+1}-t_j-1)
 	\end{align*}
 \end{proof}

 \begin{lemma}[Relation of states in two consecutive episodes]\label{lem: bdd xt j+1 by xt j}
 	For $0\leq j \leq J-2$,
 	if Algorithm \ref{alg: exp3 iss} activates the \textbf{Break} statement at $x_{t_{j+1}}$, then
 	\begin{align*}
 		\|x_{t_{j+1}}\|&\leq   L_f(1+L_{\pi}) \kappa \rho^{t_{j+1}-t_j-1}\|x_{t_j}\|+ L_f((1+L_{\pi})\beta w_{\max}+  w_{\max}+  \bar \pi_0)\\
 		\|x_{t_{j+1}}\|^2 &\leq 2 L_f^2(1+L_{\pi})^2 \kappa^2 (\rho^2)^{t_{j+1}-t_j-1}\|x_{t_j}\|^2+2 L_f^2((1+L_{\pi})\beta w_{\max}+ w_{\max}+ \bar \pi_0)^2\\
 		\|x_{t_{j+1}}\|^4 &\leq 8 L_f^4(1+L_{\pi})^4 \kappa^4 (\rho^4)^{t_{j+1}-t_j-1}\|x_{t_j}\|^4+8 L_f^4((1+L_{\pi})\beta w_{\max}+ w_{\max}+ \bar \pi_0)^4
 	\end{align*}
 	If Algorithm \ref{alg: exp3 iss} does not activate the \textbf{Break} statement at $x_{t_{j+1}}$, then
 	\begin{align*}
 		\|x_{t_{j+1}}\|&\leq \kappa \rho^{t_{j+1}-t_j}\|x_{t_j}\|+\beta w_{\max}\\
 		\|x_{t_{j+1}}\|^2 &\leq 2\kappa^2 (\rho^2)^{t_{j+1}-t_j}\|x_{t_j}\|^2 +2\beta^2 w_{\max}^2\\
 		\|x_{t_{j+1}}\|^4 &\leq 8\kappa^4 (\rho^4)^{t_{j+1}-t_j}\|x_{t_j}\|^4 +8\beta^4 w_{\max}^4
 	\end{align*}
 \end{lemma}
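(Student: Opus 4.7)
The plan is to split the proof into the two cases of the lemma and reduce each to a routine application of the dynamics, the Lipschitz bounds, and the previous single-episode bound (Lemma \ref{lem: bdd state in one episode by start of episode}).

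For the no-break case, the observation is simply that if the condition on Line 6 of Algorithm \ref{alg: exp3 iss} is \emph{not} triggered at index $t_{j+1}$, then by construction of the algorithm we have $\|x_{t_{j+1}}\|\leq \kappa\rho^{t_{j+1}-t_j}\|x_{t_j}\|+\beta w_{\max}$. This is already the first claimed inequality. The squared and quartic versions follow by applying the elementary inequalities $(a+b)^2\leq 2a^2+2b^2$ and $(a+b)^4\leq 8a^4+8b^4$ to the two summands.

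For the break case, I would unfold one step of the dynamics. Writing $x_{t_{j+1}}=f(x_{t_{j+1}-1},u_{t_{j+1}-1},w_{t_{j+1}-1})$, Assumption \ref{ass: f Lip} with $f(0,0,0)=0$ gives
\[
\|x_{t_{j+1}}\|\leq L_f\bigl(\|x_{t_{j+1}-1}\|+\|u_{t_{j+1}-1}\|+\|w_{t_{j+1}-1}\|\bigr).
\]
Since $u_{t_{j+1}-1}=\pi_{I_j}(x_{t_{j+1}-1})$, the $L_\pi$-Lipschitz property together with $\|\pi_{I_j}(0)\|\leq \bar\pi_0$ yields $\|u_{t_{j+1}-1}\|\leq L_\pi\|x_{t_{j+1}-1}\|+\bar\pi_0$, and $\|w_{t_{j+1}-1}\|\leq w_{\max}$. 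Substituting,
\[
\|x_{t_{j+1}}\|\leq L_f(1+L_\pi)\,\|x_{t_{j+1}-1}\|+L_f(\bar\pi_0+w_{\max}).
\]
The key point now is that since the break was triggered only at $t_{j+1}$, the check on Line 6 passed at the previous step, so Lemma \ref{lem: bdd state in one episode by start of episode} applies to the index $t_{j+1}-1$ (for which $t_{j+1}-1\geq t_j$), giving $\|x_{t_{j+1}-1}\|\leq \kappa\rho^{t_{j+1}-1-t_j}\|x_{t_j}\|+\beta w_{\max}$ (the indicator term being harmlessly added in the boundary case $t_{j+1}-1=t_j$ because $\kappa\geq 1$). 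Plugging this in and collecting the constants produces exactly the stated bound $L_f(1+L_\pi)\kappa\rho^{t_{j+1}-t_j-1}\|x_{t_j}\|+L_f((1+L_\pi)\beta w_{\max}+w_{\max}+\bar\pi_0)$.

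The squared and quartic versions then fall out of the same inequality by applying $(a+b)^2\leq 2a^2+2b^2$ and $(a+b)^4\leq 8a^4+8b^4$ and noting that $(\rho^{t_{j+1}-t_j-1})^2=(\rho^2)^{t_{j+1}-t_j-1}$ and similarly for the quartic exponent. There is no real obstacle here; the only place to be careful is the boundary case $t_{j+1}=t_j+1$ when invoking Lemma \ref{lem: bdd state in one episode by start of episode}, and ensuring the additive constants are grouped into the correct form advertised in the statement. The whole argument is a one-step dynamics unfolding combined with the already-proved per-episode bound.
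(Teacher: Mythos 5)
Your proposal is correct and follows essentially the same route as the paper's proof: the no-break case reads the bound directly off the Line-6 certification, and the break case unfolds one step of the dynamics via the Lipschitz bounds on $f$ and $\pi_{I_j}$ and then invokes the single-episode bound at $t_{j+1}-1$. Your extra care about the boundary case $t_{j+1}=t_j+1$ (where the indicator in the per-episode lemma vanishes but $\kappa\geq 1$ saves the estimate) is a point the paper glosses over but is handled consistently with its conventions.
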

 \begin{proof}
 	Firstly, we consider the scenario where Algorithm \ref{alg: exp3 iss} activates the \textbf{Break} statement  at $x_{t_{j+1}}$.
 	By Assumption \ref{ass: f Lip}, we have
 	$$\|x_{t+1}\|=	\|f(x_t, u_t, w_t)-f(0,0,0)\|\leq L_f(\|x_t\|+\|u_t\|+\|w_t\|).$$
 Further, by Assumption \ref{ass: one policy good}, we have
 	\begin{align*}
 		\|u_t\|\leq \|u_t-\pi_i(0)\|+\|\pi_i(0)\|\leq L_{\pi}\|x_t\|+\bar \pi_0.
 	\end{align*}
 	Combining the two inequalities above yield the following.
 	\begin{align*}
 		\|x_{t+1}\|\leq L_f(1+L_{\pi})\|x_t\|+ L_f w_{\max}+ L_f \bar \pi_0.
 	\end{align*}
 	Consequently, together with Lemma \ref{lem: bdd state in one episode by start of episode}, we have
 	\begin{align*}
 		\|x_{t_{j+1}}\|&\leq L_f(1+L_{\pi})\|x_{t_{j+1}-1}\|+ L_f w_{\max}+ L_f \bar \pi_0\\
 		& \leq L_f(1+L_{\pi}) \kappa \rho^{t_{j+1}-t_j-1}\|x_{t_j}\|+ L_f(1+L_{\pi})\beta w_{\max}+  L_f w_{\max}+ L_f \bar \pi_0.
 	\end{align*}
 	Therefore, 
 	\begin{align*}
 		\|x_{t_{j+1}}\|^2 &\leq 2 L_f^2(1+L_{\pi})^2 \kappa^2 (\rho^2)^{t_{j+1}-t_j-1}\|x_{t_j}\|^2+2 L_f^2((1+L_{\pi})\beta w_{\max}+ w_{\max}+ \bar \pi_0)^2,\\
 		\|x_{t_{j+1}}\|^4 &\leq 8 L_f^4(1+L_{\pi})^4 \kappa^4 (\rho^4)^{t_{j+1}-t_j-1}\|x_{t_j}\|^4+8 L_f^4((1+L_{\pi})\beta w_{\max}+ w_{\max}+ \bar \pi_0)^4.
 	\end{align*}
 	
 	Secondly, we consider the scenario where Algorithm \ref{alg: exp3 iss} does not activate the \textbf{Break} statement at $x_{t_{j+1}}$.
 	Similarly to Lemma \ref{lem: bdd state in one episode by start of episode}, we have
 	\begin{align*}
 		\|x_{t_{j+1}}\|&\leq \kappa \rho^{t_{j+1}-t_j}\|x_{t_j}\|+\beta w_{\max},\\
 		\|x_{t_{j+1}}\|^2 &\leq 2\kappa^2 (\rho^2)^{t_{j+1}-t_j}\|x_{t_j}\|^2 +2\beta^2 w_{\max}^2,\\
 		\|x_{t_{j+1}}\|^4 &\leq 8\kappa^4 (\rho^4)^{t_{j+1}-t_j}\|x_{t_j}\|^4 +8\beta^4 w_{\max}^4.
 	\end{align*}
 	
 \end{proof}





Now, we are ready to bound the states by discussing the $\textbf{Break}$ activation stages  $ t_{j_1}, \dots, t_{j_M} $, which were initially defined in the \nbf{notations for appendices} at the beginning of the appendices.
 \begin{lemma}[Bounds on states  between two $\textbf{Break}$ activations]\label{lem: bdd xtj btw two breaks}
 	Denote $\gamma_0=2\kappa^2\rho^{2\tau}$. Suppose $2\gamma_0^2<1$, i.e.,   $\tau \geq \frac{\log(2\sqrt 2 \kappa)}{-\log \rho}$.
 	For $1\leq s \leq M+1$, 
 	for $j_{s-1}+1\leq j \leq j_{s}-1$, we have
 	\begin{align*}
 		\|x_{t_j}\|&\leq \left(\sqrt{\gamma_0/2}\right)^{j-j_s-1}\|x_{t_{j_{s-1}}}\|+ \frac{\beta w_{\max}}{1-\sqrt{\gamma_0/2}}\\
 		\|x_{t_j}\|^2& \leq \gamma_0^{j-j_{s-1}}\|x_{t_{j_{s-1}}}\|^2 + \frac{2\beta^2w_{\max}^2}{1-\gamma_0}\\
 		\|x_{t_j}\|^4& \leq (2\gamma_0^2)^{j-j_{s-1}}\|x_{t_{j_{s-1}}}\|^4 + \frac{8\beta^4w_{\max}^4}{1-2\gamma_0^2}
 	\end{align*}
 	Consequently, 
 	\begin{align*}
 		\sum_{j=j_{s-1}}^{j_s-1} \|x_{t_j}\|& \leq  \frac{1}{1-\sqrt{\gamma_0/2}}\|x_{t_{j_{s-1}}}\|+  \frac{\beta w_{\max}}{1-\sqrt{\gamma_0/2}}(j_s-j_{s-1}-1)\\
 		\sum_{j=j_{s-1}}^{j_s-1} \|x_{t_j}\|^2& \leq \frac{1}{1-\gamma_0}\|x_{t_{j_{s-1}}}\|^2 + \frac{2\beta^2w_{\max}^2}{1-\gamma_0}(j_s-j_{s-1}-1)\\
 		\sum_{j=j_{s-1}}^{j_s-1} \|x_{t_j}\|^4& \leq \frac{1}{1-2\gamma_0^2}\|x_{t_{j_{s-1}}}\|^4 + \frac{8\beta^4w_{\max}^4}{1-2\gamma_0^2}(j_s-j_{s-1}-1)
 	\end{align*}
 \end{lemma}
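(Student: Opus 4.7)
}
The plan is to iterate the ``no-Break'' case of Lemma \ref{lem: bdd xt j+1 by xt j} across all batches that lie strictly between two consecutive Break activations, then sum the resulting geometric series. The crucial preliminary observation is that for every index $j$ with $j_{s-1}\le j\le j_s-2$, batch $j$ completes its full inner loop \emph{without} activating the \textbf{Break} statement in Line 6 of Algorithm \ref{alg: exp3 iss}; otherwise a Break would occur at time $t_{j+1}\le t_{j_s-1}$, contradicting the definition of $t_{j_s}$ as the \emph{next} Break time after $t_{j_{s-1}}$. In particular, $t_{j+1}-t_j=\tau$ for every such $j$.

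First I would invoke the no-Break branch of Lemma \ref{lem: bdd xt j+1 by xt j} with $t_{j+1}-t_j=\tau$ to obtain the one-step contractions
\[
\|x_{t_{j+1}}\|\le \kappa\rho^{\tau}\|x_{t_j}\|+\beta w_{\max},\qquad \|x_{t_{j+1}}\|^2\le \gamma_0\|x_{t_j}\|^2+2\beta^2 w_{\max}^2,\qquad \|x_{t_{j+1}}\|^4\le 2\gamma_0^2\|x_{t_j}\|^4+8\beta^4 w_{\max}^4,
\]
where the contraction rates are $\sqrt{\gamma_0/2}=\kappa\rho^{\tau}$, $\gamma_0=2\kappa^2\rho^{2\tau}$, and $2\gamma_0^2=8\kappa^4\rho^{4\tau}$. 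The hypothesis $\tau\ge \log(2\sqrt{2}\kappa)/(-\log\rho)$ forces $\kappa\rho^{\tau}\le 2^{-3/2}$, so all three rates lie in $(0,1)$ and in fact satisfy $\sqrt{\gamma_0/2}<\tfrac{1}{2}$, $\gamma_0\le \tfrac{1}{4}$, and $2\gamma_0^2\le \tfrac{1}{8}$.

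Next, unrolling each of the three recursions from the base point $j_{s-1}$ up to an arbitrary target $j\in\{j_{s-1},\dots,j_s-1\}$ produces the pointwise bounds claimed in the lemma (the exponent in the first display is $j-j_{s-1}$): the geometric tail of the forcing term collapses to $\tfrac{\beta w_{\max}}{1-\sqrt{\gamma_0/2}}$, $\tfrac{2\beta^2 w_{\max}^2}{1-\gamma_0}$, and $\tfrac{8\beta^4 w_{\max}^4}{1-2\gamma_0^2}$ respectively. At the base point $j=j_{s-1}$ the pointwise bound reduces to the trivial identity $\|x_{t_{j_{s-1}}}\|^k=\|x_{t_{j_{s-1}}}\|^k$, which contributes no forcing term.

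Finally, summing the pointwise bounds over $j=j_{s-1},\dots,j_s-1$ delivers the aggregate inequalities: the coefficient of $\|x_{t_{j_{s-1}}}\|^k$ is itself a geometric sum that collapses to $\tfrac{1}{1-r}$ with $r$ the appropriate contraction rate, while the additive forcing constant is accumulated exactly $j_s-j_{s-1}-1$ times because the $j=j_{s-1}$ term carries no forcing. The entire argument reduces to routine geometric-series bookkeeping once the no-Break property on $\{j_{s-1},\dots,j_s-2\}$ is in hand; I do not anticipate a substantive obstacle, only the need to track the off-by-one carefully between the range $j_{s-1}+1\le j\le j_s-1$ appearing in the pointwise bounds and the range $j_{s-1}\le j\le j_s-1$ appearing in the summed bounds.
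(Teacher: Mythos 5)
Your proposal is correct and follows essentially the same route as the paper's proof: identify that every batch strictly between two consecutive \textbf{Break} activations terminates without breaking, apply the no-Break branch of Lemma \ref{lem: bdd xt j+1 by xt j} to get the one-step contractions with rates $\sqrt{\gamma_0/2}$, $\gamma_0$, and $2\gamma_0^2$, unroll, and sum the geometric series. You also correctly flag that the exponent in the first pointwise bound should read $j-j_{s-1}$ (the statement's $j-j_s-1$ is a typo), which is what the paper's own derivation produces.
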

 \begin{proof}
 	For $j_{s-1}+1\leq j \leq j_s-1$, $x_{t_j}$ does not activate \textbf{Break}, hence, we can apply the second scenario in Lemma \ref{lem: bdd xt j+1 by xt j} and obtain
 	\begin{align*}
 		\|x_{t_j}\|\leq \sqrt{\frac{\gamma_0}{2}} \|x_{t_{j-1}}\|+\beta w_{\max}\leq  \left(\sqrt{\frac{\gamma_0}{2}}\right)^{j-j_{s-1}}\|x_{t_{j_{s-1}}}\|+ \frac{\beta w_{\max}}{1-\sqrt{\gamma_0/2}}
 	\end{align*}
 	Similarly, we have
 	$\|x_{t_{j}}\|^2 \leq 2\kappa^2 \rho^{2\tau}\|x_{t_{j-1}}\|^2+2 \beta^2 w_{\max}^2=\gamma_0\|x_{t_{j-1}}\|^2+2 \beta^2 w_{\max}^2 \leq \gamma_0^{j-j_{s-1}}\|x_{t_{j_{s-1}}} \|^2+\frac{2 \beta^2 w_{\max}^2}{1-\gamma_0}$, and $
 	\|x_{t_{j}}\|^4 \leq 8\kappa^4 \rho^{4\tau}\|x_{t_{j-1}}\|^4+8 \beta^4 w_{\max}^4=2\gamma_0^2\|x_{t_{j-1}}\|^4+8 \beta^4 w_{\max}^4 \leq (2\gamma_0^2)^{j-j_{s-1}}\|x_{t_{j_{s-1}}} \|^4+\frac{8 \beta^4 w_{\max}^4}{1-2\gamma_0^2}$.
 	Then, by summing over $j$, we  complete the proof. 
 \end{proof}

 \begin{lemma}[Relation of states at two consecutive $\textbf{Break}$ activations]\label{lem: bdd xtjs by tjs-1}
	Define $\gamma_1=2L_f^2 (1+L_{\pi})^2 \kappa^2$.
	For $1\leq s \leq M$, 
	\begin{align*}
		\|x_{t_{j_s}}\| & \leq \sqrt{\frac{\gamma_1}{2}}  \sqrt{\frac{\gamma_0}{2}}^{j_s-j_{s-1}-1}\|x_{t_{j_{s-1}}}\|+\alpha_8\\
		\|x_{t_{j_s}}\|^2 & \leq \gamma_1\gamma_0^{j_s-j_{s-1}-1}\|x_{t_{j_{s-1}}}\|^2 +\alpha_9\\
		\|x_{t_{j_s}}\|^4 & \leq 2\gamma_1^2(2\gamma_0^2)^{j_s-j_{s-1}-1}\|x_{t_{j_{s-1}}}\|^4 +\alpha_{10}
	\end{align*}
	where $\alpha_8=\sqrt{\frac{\gamma_1}{2}} \frac{\beta w_{\max}}{1-\sqrt{\gamma_0/2}}+ L_f((1+L_{\pi})\beta w_{\max} + w_{\max}+\bar \pi_0)$, $\alpha_9=\frac{2\gamma_1}{1-\gamma_0}\beta^2w_{\max}^2+2L_f^2((1+L_{\pi})\beta w_{\max}+ \bar \pi_0 +w_{\max})^2$, and $\alpha_{10}= 8 L_f^4((1+L_{\pi})\beta w_{\max}+\bar \pi_0+w_{\max})^4+ \frac{16 \gamma_1^2}{1-2\gamma_0^2}\beta^4w_{\max}^4$.
\end{lemma}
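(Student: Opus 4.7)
The plan is to prove this lemma by composing two of the previously established bounds along a single ``between-Break'' interval followed by a single ``Break-triggering'' step. By the definition of the Break activation indices $j_1,\dots,j_M$ at the start of the appendices, on episodes $j_{s-1}+1,\dots,j_s-1$ no Break occurs, so these are precisely the indices covered by Lemma \ref{lem: bdd xtj btw two breaks}; at episode $j_s-1$ the Break is activated at time $t_{j_s}$, so the step from $x_{t_{j_s-1}}$ to $x_{t_{j_s}}$ is governed by the first (Break-case) scenario of Lemma \ref{lem: bdd xt j+1 by xt j}.

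First I would use Lemma \ref{lem: bdd xtj btw two breaks} with $j=j_s-1$ to bound $\|x_{t_{j_s-1}}\|$ (and its square and fourth power) by a contraction in $\|x_{t_{j_{s-1}}}\|$ of exponent $j_s-1-j_{s-1}$ plus a disturbance-driven constant. Then I would apply the first scenario of Lemma \ref{lem: bdd xt j+1 by xt j} to obtain $\|x_{t_{j_s}}\| \leq L_f(1+L_\pi)\kappa\,\rho^{t_{j_s}-t_{j_s-1}-1}\|x_{t_{j_s-1}}\| + L_f((1+L_\pi)\beta w_{\max}+w_{\max}+\bar\pi_0)$. Since $\rho\leq 1$ and the prefactor $L_f(1+L_\pi)\kappa$ is exactly $\sqrt{\gamma_1/2}$, the amplification coefficient on $\|x_{t_{j_s-1}}\|$ is bounded by $\sqrt{\gamma_1/2}$. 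Substituting the bound on $\|x_{t_{j_s-1}}\|$ yields
\[
\|x_{t_{j_s}}\| \leq \sqrt{\gamma_1/2}\,(\sqrt{\gamma_0/2})^{j_s-j_{s-1}-1}\|x_{t_{j_{s-1}}}\| + \sqrt{\gamma_1/2}\frac{\beta w_{\max}}{1-\sqrt{\gamma_0/2}} + L_f((1+L_\pi)\beta w_{\max}+w_{\max}+\bar\pi_0),
\]
and the constant terms combine to give $\alpha_8$ as claimed.

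For the squared and fourth-power versions, the same two-step composition works verbatim but starting from the corresponding squared/quartic inequalities in Lemmas \ref{lem: bdd xt j+1 by xt j} and \ref{lem: bdd xtj btw two breaks}. Each composition produces a cross term, which I would split using $(a+b)^2\leq 2a^2+2b^2$ (and $(a+b)^4\leq 8a^4+8b^4$) to keep the bound in the clean separable form stated. In the square case this produces the multiplicative constant $2(L_f(1+L_\pi)\kappa)^2\cdot(2\kappa^2\rho^{2\tau})^{j_s-j_{s-1}-1} = \gamma_1\gamma_0^{j_s-j_{s-1}-1}$, and the disturbance constant is $\frac{2\gamma_1}{1-\gamma_0}\beta^2 w_{\max}^2 + 2L_f^2((1+L_\pi)\beta w_{\max}+\bar\pi_0+w_{\max})^2 = \alpha_9$; the quartic case yields the analogous expression with constant $\alpha_{10}$.

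The conceptual content is minimal; the only real ``obstacle'' is the bookkeeping of the additive constants under the $(a+b)^p\leq 2^{p-1}(a^p+b^p)$ splittings, where one must carefully track which pieces come from the inter-Break contraction (contributing the $\frac{1}{1-\gamma_0}$-type geometric sums) and which come from the one-step Lipschitz blow-up at the Break step (contributing the $L_f$-based terms). Checking that $2\gamma_0^2<1$, which holds by the hypothesis $\tau\geq \log(2\sqrt 2\kappa)/(-\log\rho)$ carried over from Lemma \ref{lem: bdd xtj btw two breaks}, ensures the geometric sums used in the quartic bound converge, so $\alpha_{10}$ is finite. No other subtleties arise, and the result follows.
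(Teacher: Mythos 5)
Your proposal is correct and takes essentially the same route as the paper: the paper likewise chains the Break-case inequality of Lemma \ref{lem: bdd xt j+1 by xt j} (bounding $\|x_{t_{j_s}}\|$ in terms of $\|x_{t_{j_s-1}}\|$ and using $L_f(1+L_\pi)\kappa\,\rho^{\,\cdot}\le \sqrt{\gamma_1/2}$) with the inter-Break contraction of Lemma \ref{lem: bdd xtj btw two breaks} evaluated at $j=j_s-1$, for each of the three powers. One small quibble: no additional $(a+b)^p$ splitting is needed at the composition step, since substituting the already-separable squared/quartic bounds directly yields $\alpha_9$ and $\alpha_{10}$ — and indeed the final constants you report are exactly the correct ones.
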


\begin{proof}
	By Lemma \ref{lem: bdd xt j+1 by xt j} and Lemma \ref{lem: bdd xtj btw two breaks}, at $t_{j_s}$, we have
	\begin{align*}
		\|x_{t_{j_s}}\|  & \leq L_f (1+L_{\pi})\kappa \rho^{t_{j_s}-t_{j_s-1}}\|x_{t_{j_s-1}}\|+ L_f((1+L_{\pi})\beta w_{\max} + w_{\max}+\bar \pi_0)\\
		& \leq  \sqrt{\frac{\gamma_1}{2}} \|x_{t_{j_s-1}}\|+ L_f((1+L_{\pi})\beta w_{\max} + w_{\max}+\bar \pi_0)\\
		& \leq \sqrt{\frac{\gamma_1}{2}}  \sqrt{\frac{\gamma_0}{2}}^{j_s-j_{s-1}-1}\|x_{t_{j_{s-1}}}\|+ \sqrt{\frac{\gamma_1}{2}} \frac{\beta w_{\max}}{1-\sqrt{\gamma_0/2}}+ L_f((1+L_{\pi})\beta w_{\max} + w_{\max}+\bar \pi_0)
	\end{align*}

	Similarly, we can complete the proof by the following.
	\begin{align*}
		\|x_{t_{j_s}}\|^2  & \leq 2L_f^2(1+L_{\pi})^2\kappa^2 (\rho^2)^{t_{j_s}-t_{j_s-1}}\|x_{t_{j_s-1}}\|^2+2 L_f^2((1+L_{\pi})\beta w_{\max}+ w_{\max}+ \bar \pi_0)^2\\
		& \leq \gamma_1\|x_{t_{j_s-1}}\|^2+2 L_f^2((1+L_{\pi})\beta w_{\max}+ w_{\max}+ \bar \pi_0)^2\\
		& \leq \gamma_1 \gamma_0^{j_s-1-j_{s-1}}\|x_{t_{j_{s-1}}}\|^2 + \gamma_1\frac{2\beta^2w_{\max}^2}{1-\gamma_0} + 2 L_f^2((1+L_{\pi})\beta w_{\max}+ w_{\max}+ \bar \pi_0)^2\\
		\|x_{t_{j_s}}\|^4  & \leq 8 L_f^4(1+L_{\pi})^4 \kappa^4 (\rho^4)^{t_{j+1}-t_j-1}\|x_{t_{j_s-1}}\|^4+8 L_f^4((1+L_{\pi})\beta w_{\max}+ w_{\max}+ \bar \pi_0)^4\\
		& \leq 2\gamma_1^2\|x_{t_{j_s-1}}\|^4+8 L_f^4((1+L_{\pi})\beta w_{\max}+ w_{\max}+ \bar \pi_0)^4\\
		& \leq 2\gamma_1^2 (2\gamma_0^2)^{j_s-1-j_{s-1}}\|x_{t_{j_{s-1}}}\|^4 + 2\gamma_1^2\frac{8\beta^4w_{\max}^4}{1-2\gamma_0^2} + 8 L_f^4((1+L_{\pi})\beta w_{\max}+ w_{\max}+ \bar \pi_0)^4
	\end{align*}
\end{proof}

 Next, we can bound  the starts of  episodes by the following.

 \begin{lemma}[Bounds on the initial states of episodes]\label{lem: bdd on sum xtj}
 	\begin{align*}
 		\sum_{j=0}^{J-1} \|x_{t_j}\| & \leq  \frac{1}{1-\sqrt{\gamma_0/2}}\frac{\sqrt{\frac{\gamma_1}{2}}^{\mathbb M+1}-1}{\sqrt{\frac{\gamma_1}{2}}-1} (\|x_0\|+ \frac{\alpha_8}{\sqrt{\frac{\gamma_1}{2}}-1})+\frac{\beta w_{\max}}{1-\sqrt{\frac{\gamma_0}{2}}} J\\
 		\sum_{j=0}^{J-1} \|x_{t_j}\|^2 & \leq \frac{1}{1-\gamma_0}\frac{\gamma_1^{\mathbb M+1}-1}{\gamma_1-1} (\|x_0\|^2+ \frac{\alpha_9}{\gamma_1-1})+\frac{2\beta^2w_{\max}^2}{1-\gamma_0} J\\
 		\sum_{j=0}^{J-1} \|x_{t_j}\|^4 & \leq \frac{1}{1-2\gamma_0^2}\frac{(2\gamma_1^2)^{\mathbb +1}-1}{2\gamma_1^2-1} (\|x_0\|^4+ \frac{\alpha_{10}}{2\gamma_1^2-1})+ \frac{8\beta^4w_{\max}^4}{1-2\gamma_0^2} J
 	\end{align*}
 \end{lemma}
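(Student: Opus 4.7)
The plan is to decompose the sum $\sum_{j=0}^{J-1} \|x_{t_j}\|$ (and the analogous squared/quartic sums) according to the $M$ \textbf{Break} activations, handle each segment with Lemma \ref{lem: bdd xtj btw two breaks}, and then control the ``seed'' states $\|x_{t_{j_s}}\|$ at the endpoints of the segments by iterating Lemma \ref{lem: bdd xtjs by tjs-1}. Since $M \leq \mathbb M$ (by Definition \ref{def: B0 M0} and Line 6 of Algorithm \ref{alg: exp3 iss}), it then suffices to replace $M$ by $\mathbb M$ in the final expression.

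First, split the index range $\{0,1,\dots,J-1\}$ into the $M+1$ blocks $\{j_{s-1},\dots,j_s-1\}$ for $s=1,\dots,M+1$. Applying the consequent inequalities of Lemma \ref{lem: bdd xtj btw two breaks} on each block and summing gives
\[
\sum_{j=0}^{J-1}\|x_{t_j}\| \;\le\; \frac{1}{1-\sqrt{\gamma_0/2}}\sum_{s=0}^{M}\|x_{t_{j_s}}\| \;+\; \frac{\beta w_{\max}}{1-\sqrt{\gamma_0/2}}\sum_{s=1}^{M+1}(j_s-j_{s-1}-1),
\]
and similarly for the $\|\cdot\|^2$ and $\|\cdot\|^4$ sums with their respective constants $\gamma_0$ and $2\gamma_0^2$. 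The second sum telescopes to $J-(M+1)\le J$, which already produces the $\beta w_{\max}J/(1-\sqrt{\gamma_0/2})$ contribution (and its squared/quartic analogues).

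Next, I would bound $\sum_{s=0}^{M}\|x_{t_{j_s}}\|$ by a pure geometric recursion. Since $\sqrt{\gamma_0/2}<1$, the $(j_s-j_{s-1}-1)$-th power of $\sqrt{\gamma_0/2}$ in Lemma \ref{lem: bdd xtjs by tjs-1} is at most $1$, so the ``break-to-break'' relation simplifies to the clean contraction-plus-offset form
\[
\|x_{t_{j_s}}\| \;\le\; \sqrt{\gamma_1/2}\,\|x_{t_{j_{s-1}}}\|+\alpha_8, \qquad s=1,\dots,M.
\]
Iterating this $s$ times starting from $\|x_0\|$ and summing the geometric series yields
\[
\|x_{t_{j_s}}\| \;\le\; \sqrt{\gamma_1/2}^{\,s}\|x_0\|+\alpha_8\frac{\sqrt{\gamma_1/2}^{\,s}-1}{\sqrt{\gamma_1/2}-1}.
\]
Here $\sqrt{\gamma_1/2}=L_f(1+L_\pi)\kappa\ge 1$ by the standing convention at the end of Section 2, so the series is a genuine increasing geometric sum. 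Summing over $s=0,\dots,M$ then gives
\[
\sum_{s=0}^{M}\|x_{t_{j_s}}\| \;\le\; \frac{\sqrt{\gamma_1/2}^{\,M+1}-1}{\sqrt{\gamma_1/2}-1}\left(\|x_0\|+\frac{\alpha_8}{\sqrt{\gamma_1/2}-1}\right),
\]
which, after multiplication by $1/(1-\sqrt{\gamma_0/2})$ and adding the earlier $J$-term, matches the first stated bound once we use $M\le \mathbb M$ and monotonicity of the geometric prefactor in $M$.

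The proofs of the squared and quartic bounds proceed identically but with $\gamma_0\mapsto\gamma_0,\,2\gamma_0^2$ and $\gamma_1/2\mapsto \gamma_1,\,2\gamma_1^2$, respectively, using the corresponding two lines of Lemmas \ref{lem: bdd xtj btw two breaks} and \ref{lem: bdd xtjs by tjs-1} together with the hypothesis $2\gamma_0^2<1$ (equivalently $\tau\ge \log(2\sqrt{2}\kappa)/(-\log\rho)$) to guarantee summability of the in-segment geometric series. The only genuine delicacy is bookkeeping: one must verify that the crude simplification $\sqrt{\gamma_0/2}^{\,j_s-j_{s-1}-1}\le 1$ (and its squared/quartic analogues) is compatible with the $M\le \mathbb M$ replacement, and that the constants $\alpha_8,\alpha_9,\alpha_{10}$ as defined in Lemma \ref{lem: bdd xtjs by tjs-1} are exactly what appears in the target inequalities. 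I expect this constant-tracking across three parallel $l_p$ calculations to be the main, though routine, obstacle.
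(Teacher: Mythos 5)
Your proposal is correct and follows essentially the same route as the paper's proof: decompose the sum over episode indices into the $M+1$ blocks delimited by the \textbf{Break} activations, bound each block via the ``Consequently'' inequalities of Lemma \ref{lem: bdd xtj btw two breaks}, then control the seed states $\|x_{t_{j_s}}\|$ by iterating the contraction-plus-offset recursion from Lemma \ref{lem: bdd xtjs by tjs-1} (dropping the $\sqrt{\gamma_0/2}^{\,j_s-j_{s-1}-1}\le 1$ factor) and summing the resulting geometric series, finishing with $M\le\mathbb M$. The only cosmetic difference is that you keep the exact geometric sum $\frac{\sqrt{\gamma_1/2}^{\,s}-1}{\sqrt{\gamma_1/2}-1}$ where the paper uses the slightly looser $\frac{\sqrt{\gamma_1/2}^{\,s}}{\sqrt{\gamma_1/2}-1}$, which does not affect the stated bound.
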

 
 \begin{proof}
 	Let's first focus on 	$\sum_{j=0}^{J-1} \|x_{t_j}\| $.  By Lemma \ref{lem: bdd xtj btw two breaks}, we have the following.
 	 	\begin{align*}
 		\sum_{j=0}^{J-1}\|x_{t_j}\| & = \sum_{s=0}^M \sum_{j={j_s}}^{j_{s+1}-1}\|x_{t_j}\|\\
 		& \leq \sum_{s=0}^M \left[ \frac{1}{1-\sqrt{\gamma_0/2}}\|x_{t_{j_{s}}}\| +\frac{\beta w_{\max}}{1-\sqrt{\gamma_0/2}}(j_{s+1}-j_{s}-1)\right]\\
 		& =  \frac{1}{1-\sqrt{\gamma_0/2}}\sum_{s=0}^M\|x_{t_{j_{s}}}\|+ \frac{\beta w_{\max}}{1-\sqrt{\gamma_0/2}}J\\
 		& \leq  \frac{1}{1-\sqrt{\gamma_0/2}} \frac{\sqrt{\frac{\gamma_1}{2}}^{\mathbb M+1}-1}{\sqrt{\frac{\gamma_1}{2}}-1} (\|x_0\|+ \frac{\alpha_8}{\sqrt{\frac{\gamma_1}{2}}-1})+ \frac{\beta w_{\max}}{1-\sqrt{\gamma_0/2}}J
 	\end{align*}
 	where the last inequality is because of the following. 
 		For $1\leq s \leq M$, by Lemma \ref{lem: bdd xtjs by tjs-1},  we have
 	$$\|x_{t_{j_s}}\|\leq \sqrt{\frac{\gamma_1}{2}}  \sqrt{\frac{\gamma_0}{2}}^{j_s-j_{s-1}-1}\|x_{t_{j_{s-1}}}\|+\alpha_8\leq \sqrt{\frac{\gamma_1}{2}}  \|x_{t_{j_{s-1}}}\|+\alpha_8,$$
 	where the equalities hold for all $1\leq s \leq M$ when $j_s=s$ for $1\leq s \leq M$, i.e., the first $M$ episodes all activate \textbf{Break}.
 	Consequently, we have 
 	$\|x_{t_{j_s}}\|\leq\sqrt{\frac{\gamma_1}{2}}^s\|x_0\|+ \alpha_8\frac{\sqrt{\frac{\gamma_1}{2}}^{s}}{\sqrt{\frac{\gamma_1}{2}}-1}$
 		and $$	\sum_{s=0}^M \|x_{t_{j_s}}\| \leq \frac{\sqrt{\frac{\gamma_1}{2}}^{M+1}-1}{\sqrt{\frac{\gamma_1}{2}}-1} (\|x_0\|+ \frac{\alpha_8}{\sqrt{\frac{\gamma_1}{2}}-1})\leq  \frac{\sqrt{\frac{\gamma_1}{2}}^{\mathbb M+1}-1}{\sqrt{\frac{\gamma_1}{2}}-1} (\|x_0\|+ \frac{\alpha_8}{\sqrt{\frac{\gamma_1}{2}}-1}),$$
 		where we used $M\leq \mathbb M$ by Definition \ref{def: B0 M0} and by Algorithm \ref{alg: exp3 iss},  and $\gamma_1/2>1$ because we assumed $\kappa\geq 1$ and $L_f\geq 1$ for simplicity.

 	Similarly, by Lemma \ref{lem: bdd xtj btw two breaks} and Lemma \ref{lem: bdd xtjs by tjs-1}, we can complete the proof by the following.
 	\begin{align*}
 		\sum_{j=0}^{J-1} \|x_{t_j}\|^2 & \leq \frac{1}{1-\gamma_0}\sum_{s=0}^M\|x_{t_{j_{s}}}\|^2+ \frac{2\beta^2w_{\max}^2}{1-\gamma_0} J\\
 		&\leq  \frac{1}{1-\gamma_0}\frac{\gamma_1^{\mathbb M+1}-1}{\gamma_1-1} (\|x_0\|^2+ \frac{\alpha_9}{\gamma_1-1})+\frac{2\beta^2w_{\max}^2}{1-\gamma_0}J\\
 		\sum_{j=0}^{J-1} \|x_{t_j}\|^4 & \leq \frac{1}{1-2\gamma_0^2}\sum_{s=0}^M\|x_{t_{j_{s}}}\|^4+ \frac{8\beta^4w_{\max}^4}{1-2\gamma_0^2} J\\
 		&\leq  \frac{1}{1-2\gamma_0^2}\frac{(2\gamma_1^2)^{\mathbb M+1}-1}{2\gamma_1^2-1} (\|x_0\|^4+ \frac{\alpha_{10}}{2\gamma_1^2-1})+ \frac{8\beta^4w_{\max}^4}{1-2\gamma_0^2} J
 	\end{align*}
 \end{proof}
 
 
 \subsection{Proof of Theorem \ref{thm: finite gain stability} and proof of \eqref{equ: l4 stability}}
The proof is straightforward from Lemma \ref{lem: bdd state in one episode by start of episode} and Lemma \ref{lem: bdd on sum xtj}. Let's first consider $\sum_{t=0}^T\|x_t\|$. 
 		\begin{align*}
 		\sum_{t=0}^T\|x_t\|& \leq\sum_{j=0}^{J-1}	\sum_{t=t_j}^{t_{j+1}-1}\|x_t\|\\
 		& \leq \sum_{j=0}^{J-1} \left[\frac{\kappa}{1-\rho}\|x_{t_j}\|+\beta w_{\max}(t_{j+1}-t_j-1)\right]\\
 		&\leq \frac{\kappa}{1-\rho}\sum_{j=0}^{J-1}\|x_{t_j}\|+\beta w_{\max}T\\
 		& \leq \beta w_{\max}T+ \frac{\kappa}{1-\rho}\left(\frac{1}{1-\sqrt{\gamma_0/2}}\frac{\sqrt{\frac{\gamma_1}{2}}^{\mathbb M+1}-1}{\sqrt{\frac{\gamma_1}{2}}-1} (\|x_0\|+ \frac{\alpha_8}{\sqrt{\frac{\gamma_1}{2}}-1})+\frac{\beta w_{\max}}{1-\sqrt{\frac{\gamma_0}{2}}} J\right)\\
 		& =\beta w_{\max} (T+ \alpha_1 J) +\alpha_2 (L_f (1+L_{\pi}) \kappa)^{\mathbb M}\|x_0\|+ \alpha_3 (L_f (1+L_{\pi}) \kappa)^{\mathbb M}(\beta w_{\max}+\bar \pi_0)
 	\end{align*}
 where the last equality is because we defined $\gamma_0=2\kappa^2 \rho^{2\tau}$, $\gamma_1=2L_f^2(1+L_{\pi})^2 \kappa^2$,\\ $\alpha_8\leq \left(L_f(1+L_{\pi}) \kappa/(1-\kappa \rho^{\tau})+ L_f(2+L_{\pi}) \right)(\beta w_{\max}+\bar \pi_0)$, and $\frac{1}{\sqrt{\frac{\gamma_1}{2}}-1}\leq 1$.

 Similarly, we can complete the proof by the following.
 	\begin{align}
 		\sum_{t=0}^T&\|x_t\|^2 \leq\sum_{j=0}^{J-1}	\sum_{t=t_j}^{t_{j+1}-1}\|x_t\|^2 \notag\\
 		& \leq \sum_{j=0}^{J-1} \left[\frac{2\kappa^2}{1-\rho^2}\|x_{t_j}\|^2+2\beta^2w_{\max}^2(t_{j+1}-t_j-1)\right]\notag\\
 		&\leq \frac{2\kappa^2}{1-\rho^2}\sum_{j=0}^{J-1}\|x_{t_j}\|^2+2\beta^2w_{\max}^2T\notag\\
 		& \leq \frac{2\kappa^2}{1-\rho^2} \left(\frac{1}{1-\gamma_0}\frac{\gamma_1^{\mathbb M+1}-1}{\gamma_1-1} (\|x_0\|^2+ \frac{\alpha_9}{\gamma_1-1})+\frac{2\beta^2w_{\max}^2}{1-\gamma_0} J \right) +2\beta^2w_{\max}^2T\notag\\
 		& \leq 2\beta^2w_{\max}^2(T+\alpha_{11} J)+ \alpha_{12} (L_f(1+L_{\pi})\kappa)^{2\mathbb M} \|x_0\|^2 + \alpha_{13} (L_f(1+L_{\pi})\kappa)^{2\mathbb M}(\beta^2 w_{\max}^2+\bar \pi_0^2) \label{equ: l2 stability bdd}\\
 			\sum_{t=0}^T&\|x_t\|^4 \leq \sum_{j=0}^{J-1}	\sum_{t=t_j}^{t_{j+1}-1}\|x_t\|^4\notag\\
 		&  \leq \sum_{j=0}^{J-1} \left[\frac{8\kappa^4}{1-\rho^4}\|x_{t_j}\|^4+8\beta^4w_{\max}^4(t_{j+1}-t_j-1)\right]\notag\\
 		&\leq\frac{8\kappa^4}{1-\rho^4} \sum_{j=0}^{J-1}\|x_{t_j}\|^4+8\beta^4w_{\max}^4 T\notag\\
 		& \leq\frac{8\kappa^4}{1-\rho^4}\frac{1}{1-2\gamma_0^2}\frac{(2\gamma_1^2)^{\mathbb M+1}-1}{2\gamma_1^2-1} (\|x_0\|^4+ \frac{\alpha_{10}}{2\gamma_1^2-1})+\frac{8\kappa^4}{1-\rho^4} \frac{8\beta^4w_{\max}^4}{1-2\gamma_0^2} J+8\beta^4w_{\max}^4T\notag\\
 		& \leq 8\beta^4w_{\max}^4(T+\alpha_{14} J)+ \alpha_{15} (L_f(1+L_{\pi})\kappa)^{4\mathbb M} \|x_0\|^4 + \alpha_{16} (L_f(1+L_{\pi})\kappa)^{4\mathbb M}(\beta^4 w_{\max}^4+\bar \pi_0^4)\label{equ: l4 stability bdd}
 	\end{align}
 where $\alpha_{11}, \dots, \alpha_{16}$ are polynomials of $\kappa, \frac{1}{1-\rho}, \frac{1}{1-2^{3/4}\kappa \rho^{\tau}}, L_f, L_{\pi}$.
 

\section{Regret analysis: proofs of Theorem \ref{thm: policy regret bdd}, Lemma \ref{lem: pseudo regret},  Lemma \ref{lem: difference btw PoR and PsR}, and Corollary \ref{cor: regret bdd order}}
Notice that 	the proof of Theorem \ref{thm: policy regret bdd} is straightforward from combining  Lemma \ref{lem: pseudo regret} and  Lemma \ref{lem: difference btw PoR and PsR}. Further, notice that the proof of Corollary \ref{cor: regret bdd order} is straightforward by plugging in the choices of algorithm parameters in Corollary \ref{cor: regret bdd order} and by using Lemma \ref{lem: bdd on J} to obtain $J\leq O(T/\tau+\mathbb M)$. Therefore, it suffices to prove Lemma \ref{lem: pseudo regret} and Lemma \ref{lem: difference btw PoR and PsR}, which is detailed below.

\subsection{Proof of Lemma \ref{lem: pseudo regret}}
In this proof, we first introduce a supportive lemma, then divide $\text{AuxRegret}$ into separate terms, and provide upper bounds on each terms, which will be combined to prove  Lemma \ref{lem: pseudo regret}.

Firstly, we introduce the supportive lemma below.
\begin{lemma}[Supportive lemma]\label{lem: fj = E tilde fj}
	Conditioning on the natural filtration $\F(I_0, \dots, I_{J-1})$, 
	we have
	$$ g_j(I_j; I_{j-1:0})=\E_{i\sim p_j} \tilde g_j(i; I_{j:0}), \qquad \E_{I_{j:0}} \tilde g_j(k; I_{j:0}) = \E_{I_{j-1:0}}  g_j(k; I_{j-1:0}),$$
for any $k \in \Pc_j$.
\end{lemma}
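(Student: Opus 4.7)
The plan is to establish both identities by direct algebraic manipulation from the definitions of $g_j$ and $\tilde g_j$ in Lines 11--12 of Algorithm~\ref{alg: exp3 iss}; the statement is essentially a standard unbiased-estimator check for Exp3-style importance weighting, adapted to our switching-control setting. The only conceptual subtlety is that the quantity $g_j(i;I_{j-1:0})$ must be interpreted as the (counterfactual) average cost over batch $j$ obtained by implementing $\pi_i$ starting from the state $x_{t_j}$ reached after actually playing $I_{j-1:0}$; with this reading, $g_j(i;I_{j-1:0})$ is a deterministic function of $i$ once $I_{0:j-1}$ and the disturbance sequence are fixed, whereas $\tilde g_j(i;I_{j:0})$ additionally depends on the randomness in $I_j$ only through the indicator $\one_{(I_j=i)}$.

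For the first identity, I would condition on the natural filtration $\F(I_0,\dots,I_{j-1})$ so that both $p_j$ and the family $\{g_j(i;I_{j-1:0})\}_{i\in\Pc_j}$ are deterministic, and compute
\[
\E_{i\sim p_j}\tilde g_j(i;I_{j:0})
\;=\;\sum_{i\in\Pc_j}p_j(i)\,\frac{g_j(i;I_{j-1:0})}{p_j(i)}\,\one_{(I_j=i)}
\;=\;\sum_{i\in\Pc_j}g_j(i;I_{j-1:0})\,\one_{(I_j=i)}
\;=\;g_j(I_j;I_{j-1:0}),
\]
where the cancellation of $p_j(i)$ is valid because $p_j(i)>0$ for every $i\in\Pc_j$ by the exponential-weights formula (Line~13), and the last equality just evaluates the indicator at $i=I_j$.

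For the second identity, I would now take expectation over $I_j\sim p_j$ conditional on $I_{0:j-1}$. Fixing any $k\in\Pc_j$ (for which $p_j(k)>0$), the factor $g_j(k;I_{j-1:0})/p_j(k)$ is $\F(I_{0:j-1})$-measurable, so
\[
\E_{I_j\mid I_{j-1:0}}\tilde g_j(k;I_{j:0})
\;=\;\frac{g_j(k;I_{j-1:0})}{p_j(k)}\,\Pr(I_j=k\mid I_{j-1:0})
\;=\;\frac{g_j(k;I_{j-1:0})}{p_j(k)}\cdot p_j(k)
\;=\;g_j(k;I_{j-1:0}).
\]
Taking the outer expectation over $I_{j-1:0}$ and using the tower property yields $\E_{I_{j:0}}\tilde g_j(k;I_{j:0})=\E_{I_{j-1:0}}g_j(k;I_{j-1:0})$, completing the proof.

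There is no real obstacle here --- the proof is a two-line calculation. The only thing I would be careful to flag is the positivity of $p_j(k)$ on the active pool $\Pc_j$ (needed to divide by it) and the well-definedness of $g_j(k;I_{j-1:0})$ as a counterfactual when $k\ne I_j$; both hold under the algorithm's specification. This lemma is then used in the proof of Lemma~\ref{lem: pseudo regret} exactly as the unbiasedness of importance-weighted losses is used in the standard Exp3 analysis.
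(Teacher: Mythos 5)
Your proof is correct and matches the paper's argument essentially line for line: both identities follow from expanding the importance-weighted estimator, cancelling $p_j(i)$, and (for the second) applying the tower property with the inner expectation over $I_j$ conditional on $I_{j-1:0}$. The additional remarks about positivity of $p_j$ on $\Pc_j$ and the counterfactual interpretation of $g_j(k;I_{j-1:0})$ are sensible but do not change the substance.
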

\begin{proof}
	The first equality is proved by the following.
	\begin{align*}
		\E_{i\sim p_j} \tilde g_j(i; I_{j:0})&= \sum_{i\in \Pc_j} p_j(i)\tilde g_j(i; I_{j:0})\\
		&= \sum_{i\in \Pc_j} p_j(i) \frac{ g_j(i; I_{j-1:0})}{p_j(i)}\one_{(I_j=i)}\\
		&=  g_j(I_j; I_{j-1:0})
	\end{align*}
	
	The second equality is proved by the following.
	\begin{align*}
		\E_{I_{j:0}} \tilde g_j(k; I_{j:0})& = 	\E_{I_{j-1:0}} \E_{I_j}[\tilde g_j(k; I_{j:0})\mid I_{j-1:0}]\\
		& = 	\E_{I_{j-1:0}} \sum_{I_j\in \Pc_j}p_j(I_j)\frac{ g_j(k; I_{j-1:0})}{p_j(k)}\one_{(I_j=k)}\\
		& = 	\E_{I_{j-1:0}}g_j(k; I_{j-1:0})
	\end{align*}
\end{proof}

Secondly, we divide AuxRegret into several terms that are convenient for proving upper bounds. For any $k \in \mathcal B$, we introduce \begin{align}
	\text{AuxRegret}_k=\tau\E_{(I_j)_{j\geq 0}}\sum_{j=0}^{J-1}g_j(I_j;I_{j-1:0})- \tau\E_{(I_j)_{j\geq 0}} \sum_{j=0}^{J-1}g_j(k;I_{j-1:0}).
\end{align}
Notice that it suffices to provide a uniform upper bound on $	\text{AuxRegret}_k$ for all $k\in \mathcal B$ in order to upper bound AuxRegret. Therefore, we will focus on $	\text{AuxRegret}_k$  in the rest of this proof. Notice that we can divide $	\text{AuxRegret}_k$   into several terms below by leveraging Lemma \ref{lem: fj = E tilde fj}.
\begin{align*}
	\text{AuxRegret}_k&=\tau \E_{(I_j)_{j\geq 0}}\sum_{j=0}^{J-1}\E_{i\sim p_j} \tilde g_j(i; I_{j:0})-\tau\E_{(I_j)_{j\geq 0}} \sum_{j=0}^{J-1}g_j(k;I_{j-1:0})
\end{align*}
Further, by adding and subtracting a same term, we have
	\begin{align*}
	\E_{i\sim p_j} \tilde g_j(i; I_{j:0})=& \ \underbrace{ \frac{1}{\eta}\log\left(\E_{i\sim p_j}\exp(-\eta \tilde g_j(i; I_{j:0}) )\right)+ \E_{i\sim p_j} \tilde g_j(i; I_{j-1:0})}_{\text{Term 1}_j}\\
	&\ \underbrace{- \frac{1}{\eta}\log\left(\E_{i\sim p_j}\exp(-\eta \tilde g_j(i; I_{j-1:0}) ))\right)}_{\text{Term 2}_j}.
\end{align*}
Therefore, we can rewrite $\text{AuxRegret}_k$ as the following.
\begin{align}
		\text{AuxRegret}_k&=\tau \E_{(I_j)_{j\geq 0}}\sum_{j=0}^{J-1}\text{Term 1}_j + \tau \E_{(I_j)_{j\geq 0}}\sum_{j=0}^{J-1}\text{Term 2}_j -\tau\E_{(I_j)_{j\geq 0}} \sum_{j=0}^{J-1}g_j(k;I_{j-1:0})
\end{align}

In the following lemmas, we  provide upper bounds on $\E_{(I_j)_{j\geq 0}}\sum_{j=0}^{J-1}\text{Term 1}_j$ and\\ $\E_{(I_j)_{j\geq 0}}\sum_{j=0}^{J-1}\text{Term 2}_j$. The upper bound on $\text{AuxRegret}_k$   is straightforward by combining the upper bounds in Lemma \ref{lem: bdd on E sum Term 3} and Lemma \ref{lem: bdd on sum of Term4}, which completes the proof.
\begin{lemma}[Bounds on the sum of $\text{Term 1}_j$]\label{lem: bdd on E sum Term 3}
	\begin{align*}
		&	\E_{(I_j)_{j\geq 0}} \sum_{j=0}^{J-1}\textup{Term 1}_j \leq \frac{1}{\tau}\left( \alpha_4 \eta NT  \textup{poly}( \beta w_{\max}, \bar \pi_0)+ \alpha_5  \eta N(L_f(1+L_{\pi})\kappa)^{4\mathbb M} \textup{poly}(\|x_0\|, \beta w_{\max}, \bar \pi_0)\right)
	\end{align*}
\end{lemma}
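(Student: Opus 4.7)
The plan is to mimic the classical Exp3 regret analysis via a second-order expansion of the log-moment-generating function, adapted to the batched setting with a shrinking active controller set and potentially unbounded importance-weighted estimators.

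First, I would apply the pointwise inequality $e^{-x}\le 1-x+\tfrac{1}{2}x^2$ (valid for $x\ge 0$, which holds since $\tilde g_j\ge 0$) followed by $\log(1+y)\le y$ to obtain, for each batch $j$,
\[
\tfrac{1}{\eta}\log\E_{i\sim p_j}\exp(-\eta\tilde g_j(i;I_{j:0}))\;\le\;-\E_{i\sim p_j}\tilde g_j(i;I_{j:0})+\tfrac{\eta}{2}\E_{i\sim p_j}\tilde g_j(i;I_{j:0})^2.
\]
Substituting into the definition of $\textup{Term 1}_j$, the linear piece $-\E_{i\sim p_j}\tilde g_j(i;I_{j:0})$ cancels in expectation against the other term of $\textup{Term 1}_j$ via Lemma \ref{lem: fj = E tilde fj}, which equates the conditional expectation of $\E_{i\sim p_j}\tilde g_j(i;I_{j:0})=g_j(I_j;I_{j-1:0})$ with $\E_{i\sim p_j}g_j(i;I_{j-1:0})$. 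This reduces the problem to controlling $\tfrac{\eta}{2}\,\E\,\E_{i\sim p_j}\tilde g_j(i;I_{j:0})^2$ per batch.

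Second, I would evaluate the second moment. Because $\tilde g_j(i;I_{j:0})=g_j(i;I_{j-1:0})/p_j(i)\cdot\one_{(I_j=i)}$, we get $\E_{i\sim p_j}\tilde g_j^2=g_j(I_j;I_{j-1:0})^2/p_j(I_j)$, and a further conditional expectation over $I_j$ collapses the $1/p_j(I_j)$ factor to yield $\sum_{i\in\Pc_j}g_j(i;I_{j-1:0})^2$. The critical step is now to extract a factor of $1/\tau$ via Cauchy--Schwarz applied to the batch average:
\[
g_j(i;I_{j-1:0})^2\;=\;\Big(\tfrac{1}{\tau}\sum_{t=t_j}^{t_{j+1}-1} c_t(x_t^{(i)},u_t^{(i)})\Big)^{\!2}\;\le\;\tfrac{1}{\tau}\sum_{t=t_j}^{t_{j+1}-1} c_t(x_t^{(i)},u_t^{(i)})^2.
\]
Assumption \ref{ass: ct}, combined with $\|u_t^{(i)}\|\le L_\pi\|x_t^{(i)}\|+\bar\pi_0$ from Assumption \ref{ass: one policy good}, then gives $c_t^2\le O(\|x_t^{(i)}\|^4+1)$, and iterating the Lipschitz bound $\|x_{t+1}^{(i)}\|\le L_f(1+L_\pi)\|x_t^{(i)}\|+L_f(\bar\pi_0+w_{\max})$ from Assumption \ref{ass: f Lip} controls the counterfactual state $\|x_t^{(i)}\|^4$ over the batch in terms of $\|x_{t_j}\|^4$ and batch-level constants, uniformly over $i\in\Pc_j$.

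Third, I would aggregate across batches. Summing and taking expectation, then invoking Lemma \ref{lem: bdd on sum xtj} to obtain
\[
\E\sum_{j=0}^{J-1}\|x_{t_j}\|^4\;=\;O\!\bigl((L_f(1+L_\pi)\kappa)^{4\mathbb M}\,\textup{poly}(\|x_0\|,\beta w_{\max},\bar\pi_0)+J\,\beta^4 w_{\max}^4\bigr),
\]
together with Lemma \ref{lem: bdd on J} giving $J=O(T/\tau+\mathbb M)$ so that $J\,\beta^4 w_{\max}^4\le O(T/\tau)\cdot\textup{poly}(\beta w_{\max})$, yields exactly the claimed bound
\[
\E\sum_{j=0}^{J-1}\textup{Term 1}_j\;\le\;\tfrac{1}{\tau}\bigl(\alpha_4\eta NT\,\textup{poly}(\beta w_{\max},\bar\pi_0)+\alpha_5\eta N(L_f(1+L_\pi)\kappa)^{4\mathbb M}\,\textup{poly}(\|x_0\|,\beta w_{\max},\bar\pi_0)\bigr).
\]

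The main obstacle will be controlling the counterfactual cost $g_j(i;I_{j-1:0})^2$ uniformly over all $i\in\Pc_j$, because controllers in $\Pc_j\cap\mathcal B_0$ that have not yet been deactivated can in principle drive the counterfactual state to grow at the Lipschitz rate $L_f(1+L_\pi)$ per step over an entire batch of length $\tau$. The two ingredients that keep the final bound polynomial in $\tau$ are (i) the Cauchy--Schwarz extraction of $1/\tau$ from the squared batch average, which trades a squared sum for a sum of squares, and (ii) the fourth-moment state-aggregation bound from Lemma \ref{lem: bdd on sum xtj}, which absorbs the per-step growth factor into the single global factor $(L_f(1+L_\pi)\kappa)^{4\mathbb M}$ that is independent of $\tau$.
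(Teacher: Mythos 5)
Your overall architecture matches the paper's: the second-order expansion $e^{-x}\le 1-x+\tfrac12 x^2$ reduces $\textup{Term 1}_j$ to $\tfrac{\eta}{2}\E_{i\sim p_j}\tilde g_j^2$, the importance weight collapses under the conditional expectation over $I_j$ to give $\sum_{i\in\Pc_j}g_j^2(i;I_{j-1:0})$, Cauchy--Schwarz extracts the $1/\tau$, and the batch-initial states are aggregated via Lemma \ref{lem: bdd on sum xtj}. But there is a genuine gap in how you control the states \emph{within} a batch. You propose to bound $\|x_t^{(i)}\|$ for $t_j\le t\le t_{j+1}-1$ by iterating the open-loop Lipschitz recursion $\|x_{t+1}\|\le L_f(1+L_\pi)\|x_t\|+L_f(\bar\pi_0+w_{\max})$. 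Since $L_f(1+L_\pi)\kappa\ge 1$, iterating this over a batch yields a factor of order $(L_f(1+L_\pi))^{\tau}$, hence $\sum_{t}\|x_t\|^4$ picks up $(L_f(1+L_\pi))^{4\tau}$. With $\tau=\Theta(T^{1/3}N^{-1/3})$ this is $\exp(\Theta(T^{1/3}))$, which destroys the claimed bound. Neither of the two rescue ingredients you name repairs this: Cauchy--Schwarz only converts a squared sum into a sum of squares and does not touch the per-step geometric growth, and Lemma \ref{lem: bdd on sum xtj} only controls the batch-\emph{initial} states $\|x_{t_j}\|$, not the evolution inside the batch. You have conflated the cross-batch growth at \textbf{Break} activations (which is correctly absorbed into $(L_f(1+L_\pi)\kappa)^{4\mathbb M}$ by Lemmas \ref{lem: bdd xt j+1 by xt j}--\ref{lem: bdd on sum xtj}) with the within-batch growth, which is a separate issue.

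The missing idea is that the within-batch states are controlled not by the dynamics' Lipschitz constant but by the algorithm's Line~6 certificate: since Exp3-ISS checks $\|x_{t+1}\|\le\kappa\rho^{t+1-t_j}\|x_{t_j}\|+\beta w_{\max}$ at \emph{every} step and breaks the batch immediately upon failure, every state observed up to $t_{j+1}-1$ satisfies $\|x_t\|\le\kappa\rho^{t-t_j}\|x_{t_j}\|+\beta w_{\max}$ regardless of whether the implemented controller is stabilizing (this is Lemma \ref{lem: bdd state in one episode by start of episode}). The raw Lipschitz iteration is needed only for the single step at which the break occurs, which is why the exponential factor in the final bound is $(L_f(1+L_\pi)\kappa)^{4\mathbb M}$ --- one factor per break, at most $\mathbb M$ breaks --- rather than per stage. (As a side point, note that this certificate argument applies to the counterfactual $g_j(i;\cdot)$ for $i\in\Pc_j$ because in the realization $I_j=i$ the algorithm would run that controller and enforce the same check; your phrase ``uniformly over $i\in\Pc_j$'' is the right conclusion but needs this justification, not the Lipschitz recursion.) A minor additional remark: the cancellation of the linear piece against $\E_{i\sim p_j}\tilde g_j$ in $\textup{Term 1}_j$ is an exact pointwise identity from the definition of $\tilde g_j$, so no appeal to Lemma \ref{lem: fj = E tilde fj} or to taking expectations is needed there.
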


\begin{lemma}[Bounds on the sum of $\text{Term 2}_j$]\label{lem: bdd on sum of Term4}
	\begin{align*}
		\E_{(I_j)_{j\geq 0}} \sum_{j=0}^{J-1} \textup{Term 2}_j \leq \sum_{j=0}^{J-1} \E_{I_{0:j-1}}g_j(k; I_{j-1:0})+\frac{\log N}{\eta}
	\end{align*}
\end{lemma}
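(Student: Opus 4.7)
The plan is to run the standard exponential-weights potential argument, with the main adaptation being that the active controller pool $\Pc_j$ can shrink between batches.

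First I would define the potential $W_j := \sum_{i\in\Pc_j}\exp(-\eta\,\tilde G_{j-1}(i;I_{j-1:0}))$ with the convention $\tilde G_{-1}\equiv 0$, so $W_0=N$. Since the weight update in Line~13 of Algorithm~\ref{alg: exp3 iss} gives $p_j(i)=\exp(-\eta\,\tilde G_{j-1}(i))/W_j$ for $i\in\Pc_j$, and since $\tilde G_j=\tilde G_{j-1}+\tilde g_j$, a direct computation yields
\[
\E_{i\sim p_j}\exp\bigl(-\eta\,\tilde g_j(i;I_{j:0})\bigr) \;=\; \frac{1}{W_j}\sum_{i\in\Pc_j}\exp(-\eta\,\tilde G_j(i)) \;\geq\; \frac{W_{j+1}}{W_j},
\]
where the inequality uses $\Pc_{j+1}\subseteq\Pc_j$ and non-negativity of the summands. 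Applying $-\tfrac{1}{\eta}\log$ to both sides gives $\text{Term 2}_j \leq \tfrac{1}{\eta}(\log W_j - \log W_{j+1})$, and telescoping over $j=0,\ldots,J-1$ produces $\sum_{j=0}^{J-1}\text{Term 2}_j \leq \tfrac{\log N}{\eta} - \tfrac{1}{\eta}\log W_J$.

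Next, to introduce the comparator $k\in\B$, I would argue that $k$ is never removed from the active pool. Only the currently selected controller can be de-activated (Line~7), and whenever $k$ is the selected controller the E-ISS property in Definition~\ref{def: exp ISS} guarantees $\|x_{t+1}\|\leq\kappa\rho^{t+1-t_j}\|x_{t_j}\|+\beta w_{\max}$, so the stability certificate in Line~6 is never triggered for $k$. By induction $k\in\Pc_J$ almost surely, which implies $W_J \geq \exp(-\eta\,\tilde G_{J-1}(k))$ and hence
\[
\sum_{j=0}^{J-1}\text{Term 2}_j \;\leq\; \tfrac{\log N}{\eta}+\tilde G_{J-1}(k) \;=\; \tfrac{\log N}{\eta}+\sum_{j=0}^{J-1}\tilde g_j(k;I_{j:0}).
\]
Taking expectations and applying the unbiasedness identity $\E_{I_{j:0}}\tilde g_j(k;I_{j:0})=\E_{I_{j-1:0}}g_j(k;I_{j-1:0})$ from Lemma~\ref{lem: fj = E tilde fj} then gives the claimed bound.

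The main technical subtlety is the shrinking active pool: in the textbook Exp3 analysis the telescoping step is an equality, whereas here I only have the one-sided inequality $W_{j+1}\leq\sum_{i\in\Pc_j}\exp(-\eta\,\tilde G_j(i))$. Fortunately this inequality points in the direction needed, because dropping terms from $W_{j+1}$ makes $-\log W_{j+1}$ larger and hence upper-bounds $\text{Term 2}_j$ rather than lower-bounding it. A minor secondary point is that $J$ is a data-dependent stopping time, but since the chain of inequalities above holds pathwise before taking expectations, this presents no obstacle.
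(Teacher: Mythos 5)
Your proposal is correct and follows essentially the same route as the paper: the paper also telescopes a log-sum-exp potential (its $\Phi_j$ is your $\tfrac{1}{\eta}\log(W_{j+1}/N)$ up to indexing), uses the one-sided inequality from $\Pc_{j+1}\subseteq\Pc_j$ in exactly the direction you note, lower-bounds the final potential by the single term for $k\in\B$, and finishes with the unbiasedness identity from Lemma \ref{lem: fj = E tilde fj}. Your explicit justification that $k$ is never de-activated (the E-ISS property means the Line~6 certificate never fires for $k$) is a point the paper leaves implicit in the inclusion $\B\subseteq\Pc_{J-1}$.
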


The proofs of Lemma \ref{lem: bdd on E sum Term 3} and Lemma \ref{lem: bdd on sum of Term4} are provided in the following.

\subsubsection{Proof of Lemma \ref{lem: bdd on E sum Term 3} }
Firstly, we bound $\text{Term 1}_j$ by the following.
\begin{align}
	\text{Term 1}_j& \leq \frac{1}{\eta}\left(\E_{i\sim p_j}\exp(-\eta \tilde g_j(i; I_{j-1:0}) )-1\right)+ \E_{i\sim p_j} \tilde g_j(i; I_{j-1:0})\notag\\
	& \leq \frac{1}{\eta}\left( \E_{i\sim p_j} \frac{\eta^2}{2} \tilde g_j^2(i; I_{j-1:0}) -\eta  \tilde g_j(i; I_{j-1:0}) )\right)+ \E_{i\sim p_j} \tilde g_j(i; I_{j-1:0})\notag\\
	& =\frac{\eta}{2} \E_{i\sim p_j} \tilde g_j^2(i; I_{j-1:0})\notag \\
	& = \frac{\eta}{2} \sum_{i\in \Pc_{j}} p_j(i) \frac{g_j^2(i; I_{j-1:0})}{p^2_j(i)}\one_{(I_j=i)}\notag\\
	&= \frac{\eta}{2}\frac{g_j^2(I_j; I_{j-1:0})}{p_j(I_j)} \label{equ: bdd on Term 1j}
\end{align}
Secondly, we bound $	\E_{(I_j)_{j\geq 0}}	\text{Term 1}_j$ by the following.
	\begin{align}
	\E_{(I_j)_{j\geq 0}} \text{Term 1}_j &\leq 	\E_{(I_j)_{j\geq 0}} \frac{\eta}{2}\frac{g_j^2(I_j; I_{j-1:0})}{p_j(I_j)}\notag\\
	& = \E_{I_{j-1:0}} \E_{I_j}\left( \frac{\eta}{2}\frac{g_j^2(I_j; I_{j-1:0})}{p_j(I_j)} \mid I_{j-1:0}\right)\notag\\
	& = \E_{I_{j-1:0}}  \sum_{I_j\in \Pc_j}  \left(p_j(I_j) \frac{\eta}{2}\frac{g_j^2(I_j; I_{j-1:0})}{p_j(I_j)} \right)\notag\\
	&= \E_{I_{j-1:0}}  \sum_{I_j\in \Pc_j} \frac{\eta}{2}g_j^2(I_j; I_{j-1:0}) \label{equ: bdd E term 1j}
\end{align}
where the first inequality is by \eqref{equ: bdd on Term 1j}.

Next, we bound $g_j^2(i; I_{j-1:0})$ for any $i \in \Pc_j$. For any $i \in \Pc_j$, we let $\mathring x_{t_j}, \dots, \mathring x_{t_{j+1}-1}$ denote the state trajectory generated by implementing policy $\pi_i$ at episode $j$ and implementing policy $\pi_{I_{j'}}$ at episode $0\leq j'\leq j-1$. Notice that $\mathring x_{t_j}=x_{t_j}$, where $x_{t_j}$ is the state trajectory generated by Algorithm \ref{alg: exp3 iss}. We can bound $g_j^2(i; I_{j-1:0})$  by the following.
\begin{align*}
	g_j^2(i;I_{j-1:0})&=\frac{1}{\tau^2} \left(\sum_{t=t_j}^{t_{j+1}-1} c_t(\mathring x_t,\mathring u_t)\right)^2 \\
	& \leq \frac{1}{\tau^2} \sum_{t=t_j}^{t_{j+1}-1}  c_t^2(\mathring x_t,\mathring u_t)(t_{j+1}-t_j)\\
	& \leq \frac{1}{\tau} \sum_{t=t_j}^{t_{j+1}-1} \left( 2L_{c1}\|\mathring x_t\|^2+2L_{c1}\|\mathring u_t\|^2+L_{c2}\|\mathring x_t\|+L_{c2}\|\mathring u_t\|+c_0 \right)^2\\
	& \leq  \frac{5}{\tau}\sum_{t=t_j}^{t_{j+1}-1} \left( 4L_{c1}^2(\|\mathring x_t\|^4+\|\mathring u_t\|^4)+L_{c2}^2(\|\mathring x_t\|^2+\|\mathring u_t\|^2)+c_0^2 \right)\\
	& = \frac{20L_{c1}^2}{\tau}\sum_{t=t_j}^{t_{j+1}-1}(\|\mathring x_t\|^4+\|\mathring u_t\|^4)+\frac{5L_{c2}^2}{\tau}\sum_{t=t_j}^{t_{j+1}-1}(\|\mathring x_t\|^2+\|\mathring u_t\|^2)+ \frac{5c_0^2}{\tau}(t_{j+1}-t_j),
\end{align*}
where the second inequality is by 
	\begin{align*}
	|c_t(x,u)|& =|c_t(x,u)-c_t(0,0)+c_t(0,0)|\\
	& \leq |c_t(x,u)-c_t(0,0)| + c_0\\
	& \leq L_{c1}(\|x\|+\|u\|)^2+L_{c2}(\|x\|+\|u\|)+c_0\\
	& \leq 2L_{c1}\|x\|^2+2L_{c1}\|u\|^2+L_{c2}\|x\|+L_{c2}\|u\|+c_0.
\end{align*}

By Assumption \ref{ass: one policy good}, we have
$\|\mathring u_t\|^4 \leq 8 L_{\pi}^4\|\mathring x_t\|^4+8 \bar \pi_0^4$ and $\|\mathring u_t\|^2 \leq 2 L_{\pi}^2\|\mathring x_t\|^2+2 \bar \pi_0^2$. Hence, together with Lemma \ref{lem: bdd state in one episode by start of episode}, we have
\begin{align*}
	\sum_{t=t_j}^{t_{j+1}-1}(\|\mathring x_t\|^4+\|\mathring u_t\|^4)& \leq 	\sum_{t=t_j}^{t_{j+1}-1}(\|\mathring x_t\|^4+8 L_{\pi}^4\|\mathring x_t\|^4+8 \bar \pi_0^4)\\
	& = (1+8L_{\pi}^4)\sum_{t=t_j}^{t_{j+1}-1}\|\mathring x_t\|^4+ 8 \bar \pi_0^4(t_{j+1}-t_j)\\
	& \leq  (1+8L_{\pi}^4) \frac{8 \kappa^4}{1-\rho^4} \|x_{t_j}\|^4+ (1+8L_{\pi}^4) 8 \beta^4 w_{\max}^4(t_{j+1}-t_j)+8 \bar \pi_0^4(t_{j+1}-t_j)\\
	\sum_{t=t_j}^{t_{j+1}-1}(\|\mathring x_t\|^2+\|\mathring u_t\|^2)& \leq 	\sum_{t=t_j}^{t_{j+1}-1}(\|\mathring x_t\|^2+2 L_{\pi}^2\|\mathring x_t\|^2+2 \bar \pi_0^2)\\
	& = (1+2L_{\pi}^2)\sum_{t=t_j}^{t_{j+1}-1}\|\mathring x_t\|^2+ 2 \bar \pi_0^2(t_{j+1}-t_j)\\
	& \leq  (1+2L_{\pi}^2) \frac{2 \kappa^2}{1-\rho^2} \| x_{t_j}\|^2+ (1+2L_{\pi}^2) 2 \beta^2 w_{\max}^2(t_{j+1}-t_j)+2 \bar \pi_0^2(t_{j+1}-t_j)
\end{align*}

By combining the inequalities above, we obtain the following uniform upper bound on 	$g_j^2(i;I_{j-1:0})$ for any $i \in \Pc_j$.
\begin{align*}
	g_j^2(i;I_{j-1:0})& \leq \frac{20L_{c1}^2}{\tau} (1+8L_{\pi}^4) \frac{8 \kappa^4}{1-\rho^4} \|x_{t_j}\|^4+ \frac{20L_{c1}^2}{\tau}((1+8L_{\pi}^4) 8 \beta^4 w_{\max}^4+8 \bar \pi_0^4)(t_{j+1}-t_j)\\
	& \quad +\frac{5L_{c2}^2}{\tau}(1+2L_{\pi}^2) \frac{2 \kappa^2}{1-\rho^2} \|x_{t_j}\|^2+ \frac{5L_{c2}^2}{\tau}((1+2L_{\pi}^2) 2 \beta^2 w_{\max}^2+2 \bar \pi_0^2)(t_{j+1}-t_j)\\
	& \quad +\frac{5c_0^2}{\tau}(t_{j+1}-t_j)\\
	& =  \frac{160L_{c1}^2}{\tau} (1+8L_{\pi}^4) \frac{ \kappa^4}{1-\rho^4} \|x_{t_j}\|^4+\frac{10L_{c2}^2}{\tau}(1+2L_{\pi}^2) \frac{ \kappa^2}{1-\rho^2} \|x_{t_j}\|^2\\
	& \quad + \left(  \frac{160L_{c1}^2}{\tau}((1+8L_{\pi}^4)  \beta^4 w_{\max}^4+ \bar \pi_0^4)+ \frac{10L_{c2}^2}{\tau}((1+2L_{\pi}^2)  \beta^2 w_{\max}^2+ \bar \pi_0^2)+\frac{5c_0^2}{\tau}\right)(t_{j+1}-t_j)\\
	& = \frac{1}{\tau}\left(\alpha_{17} \|x_{t_j}\|^4+\alpha_{18} \|x_{t_j}\|^2+\alpha_{19} \text{poly}(\beta w_{\max}, \bar \pi_0) (t_{j+1}-t_j)\right)
\end{align*}
where $\alpha_{17}, \dots, \alpha_{19}$ are polynomials of $\kappa, L_{c1}, L_{c2}, \frac{1}{1-\rho}, L_{\pi}, c_0$.
By plugging the uniform upper bound on $g_j^2(i;I_{j-1:0})$ above to \eqref{equ: bdd E term 1j}, we obtain the following.
	\begin{align}
	\E_{(I_j)_{j\geq 0}} \text{Term 1}_j &\leq \E_{I_{j-1:0}}   \frac{\eta N}{2\tau }\left(\alpha_{17} \|x_{t_j}\|^4+\alpha_{18} \|x_{t_j}\|^2+\alpha_{19} \text{poly}(\beta w_{\max}, \bar \pi_0) (t_{j+1}-t_j)\right)
\end{align}
Therefore, by summing over $j=0,\dots, J-1$, we can prove Lemma \ref{lem: bdd on E sum Term 3} as follows.
\begin{align*}
		\E_{(I_j)_{j\geq 0}} \sum_{j=0}^{J-1}\textup{Term 1}_j & \leq \sum_{j=0}^{J-1}\E_{I_{j-1:0}}   \frac{\eta N}{2\tau }\left(\alpha_{17} \|x_{t_j}\|^4+\alpha_{18} \|x_{t_j}\|^2+\alpha_{19} \text{poly}(\beta w_{\max}, \bar \pi_0) (t_{j+1}-t_j)\right)\\
		& = 	\E_{(I_j)_{j\geq 0}}\sum_{j=0}^{J-1}  \frac{\eta N}{2\tau }\left(\alpha_{17} \|x_{t_j}\|^4+\alpha_{18} \|x_{t_j}\|^2+\alpha_{19} \text{poly}(\beta w_{\max}, \bar \pi_0) (t_{j+1}-t_j)\right)\\
		& \leq \frac{\eta N}{2\tau }  \alpha_{17} \left(\frac{1}{1-\gamma_0}\frac{\gamma_1^{\mathbb M+1}-1}{\gamma_1-1} (\|x_0\|^2+ \frac{\alpha_9}{\gamma_1-1})+\frac{2\beta^2w_{\max}^2}{1-\gamma_0} J\right) \\
		&+ \frac{\eta N}{2\tau } \alpha_{18} \left( \frac{1}{1-2\gamma_0^2}\frac{(2\gamma_1^2)^{\mathbb M +1}-1}{2\gamma_1^2-1} (\|x_0\|^4+ \frac{\alpha_{10}}{2\gamma_1^2-1})+ \frac{8\beta^4w_{\max}^4}{1-2\gamma_0^2} J \right) \\
		&+ \alpha_{19} \text{poly}(\beta w_{\max}, \bar \pi_0) T \frac{\eta N}{2\tau } \\
		&= \frac{1}{\tau}\left( \alpha_4 \eta NT  \text{poly}( \beta w_{\max}, \bar \pi_0)+ \alpha_5  \eta N(L_f(1+L_{\pi})\kappa)^{4\mathbb M} \text{poly}(\|x_0\|, \beta w_{\max}, \bar \pi_0)\right)
\end{align*}
where the last equality is because we defined $\gamma_0=2\kappa^2 \rho^{2\tau}$, $\gamma_1=2L_f^2(1+L_{\pi})^2 \kappa^2$,\\ $\alpha_9=\frac{2\gamma_1}{1-\gamma_0}\beta^2w_{\max}^2+2L_f^2((1+L_{\pi})\beta w_{\max}+ \bar \pi_0 +w_{\max})^2$, and $\alpha_{10}= 8 L_f^4((1+L_{\pi})\beta w_{\max}+\bar \pi_0+w_{\max})^4+ \frac{16 \gamma_1^2}{1-2\gamma_0^2}\beta^4w_{\max}^4$.

\subsubsection{Proof of Lemma \ref{lem: bdd on sum of Term4} }

	\begin{align*}
		\text{Term 2}_j & = - \frac{1}{\eta}\log\left(\E_{i\sim p_j}\exp(-\eta \tilde g_j(i; I_{j-1:0}) )\right)\\
		& = - \frac{1}{\eta}\log\left( \sum_{i\in \Pc_j} p_j(i) \exp(-\eta \tilde g_j(i; I_{j-1:0}) )\right)\\
		& = - \frac{1}{\eta}\log\left( \frac{ \sum_{i\in \Pc_j}  \exp(-\eta \tilde G_{j-1}(i; I_{j-1:0}))\exp(-\eta \tilde g_j(i; I_{j-1:0}) )}{\sum_{i\in \Pc_j}\exp(-\eta \tilde G_{j-1}(i; I_{j-1:0}))  }\right)\\
		& = - \frac{1}{\eta}\log\left( \frac{ \sum_{i\in \Pc_j}  \exp(-\eta \tilde G_{j}(i; I_{j-1:0}))}{\sum_{i\in \Pc_j}\exp(-\eta \tilde G_{j-1}(i; I_{j-1:0}))  }\right)\\
		& = - \frac{1}{\eta}\log\left(\frac{1}{N}\sum_{i\in \Pc_j}  \exp(-\eta \tilde G_{j}(i; I_{j-1:0}))\right)+\frac{1}{\eta}\log\left(\frac{1}{N}\sum_{i\in \Pc_j}\exp(-\eta \tilde G_{j-1}(i; I_{j-1:0})) \right)
	\end{align*}
	where we define $\tilde G_j(i;I_{j-1:0})=\tilde g_j(i;I_{j-1:0})+ \tilde G_{j-1}(i; I_{j-1:0})$ for $i\in\Pc_j- \Pc_{j+1}$ when $j\geq 0$.
	
	For further ease of notations, let's define $\Pc_{-1}=\{1,\dots, N\}$ and\\ $\Phi_j(\eta)= \frac{1}{\eta}\log\left( \frac{1}{N} \sum_{i\in \mathcal P_j} \exp(-\eta \tilde F_j(i;I_{j-1:0}))\right)$ for $j \geq -1$. Notice that $\Phi_{-1}(\eta)=0$ because $\tilde F_{-1}(\cdot)=0$. 
	Then, for $j\geq 0$, we have
	\begin{align*}
		\text{Term 2}_j & =-\Phi_j(\eta)+ \Phi_{j-1}(\eta)+\frac{1}{\eta}\log\left(\frac{\sum_{i\in \Pc_j}\exp(-\eta \tilde G_{j-1}(i; I_{j-1:0}))}{\sum_{i\in \Pc_{j-1}}\exp(-\eta \tilde G_{j-1}(i; I_{j-1:0}))} \right)\\
		& \leq -\Phi_j(\eta)+ \Phi_{j-1}(\eta)
	\end{align*}
	since $\Pc_{j-1}\supseteq \Pc_j$ for $j\geq 0$.
	
	By summing $\text{Term 2}_j$ over $j$, we obtain
	\begin{align*}
		\E_{(I_j)_{j\geq 0}} \sum_{j=0}^{J-1} \text{Term 2}_j& = 	\E_{(I_j)_{j\geq 0}} \sum_{j=0}^{J-1} ( -\Phi_j(\eta)+ \Phi_{j-1}(\eta))\\
		& =	\E_{(I_j)_{j\geq 0}}\Phi_{-1}(\eta)-\Phi_{J-1}(\eta) =	\E_{(I_j)_{j\geq 0}}(-\Phi_{J-1}(\eta) )\\
		& = 	\E_{(I_j)_{j\geq 0}}\frac{-1}{\eta}\log\left( \frac{1}{N} \sum_{i\in \mathcal P_{J-1}} \exp(-\eta \tilde G_{J-1}(i;I_{J-1:0}))\right)\\
		& \overset{(a)}{\leq} 	\E_{(I_j)_{j\geq 0}}\frac{-1}{\eta}\log\left( \frac{1}{N} \exp(-\eta \tilde G_{J-1}(k;I_{J-1:0}))\right)\\
		& = 	\E_{(I_j)_{j\geq 0}}\frac{-1}{\eta}\log\left(  \exp(-\eta \tilde G_{J-1}(k;I_{J-1:0}))\right)+\frac{\log N}{\eta}\\
		& = \E_{(I_j)_{j\geq 0}} \tilde G_{J-1}(k;I_{J-1:0})+\frac{\log N}{\eta}\\
		& = \E_{(I_j)_{j\geq 0}} \sum_{j=0}^{J-1} \tilde g_j(k; I_{j:0})+\frac{\log N}{\eta}\\
		&= \sum_{j=0}^{J-1} \E_{(I_j)_{j\geq 0}} \tilde g_j(k; I_{j:0})+\frac{\log N}{\eta}\\
		& =  \sum_{j=0}^{J-1} \E_{I_{0:j}}\E_{I_{j+1: J-1}} [\tilde g_j(k; I_{j:0})\mid I_{0:j} ]+\frac{\log N}{\eta}\\
		& =  \sum_{j=0}^{J-1} \E_{I_{0:j}}\tilde g_j(k; I_{j:0})+\frac{\log N}{\eta}\\
		& \overset{(b)}{=} \sum_{j=0}^{J-1} \E_{I_{0:j-1}}g_j(k; I_{j-1:0})+\frac{\log N}{\eta}
	\end{align*}
	where $k$ in the inequality (a) is the same $k$ in the definition of $\text{AuxRegret}_k$ and this inequality (a) holds because $k\in \mathcal B\subseteq \Pc_{J-1}$; besides, the equality (b) is because Lemma \ref{lem: fj = E tilde fj} and $k \in \mathcal B\subseteq \Pc_{j}$ for any $0\leq j \leq J-1$.
	This completes the proof.

\subsection{Proof of Lemma  \ref{lem: difference btw PoR and PsR}}
	For ease of notations, we introduce the following definitions. Let $i^*\in \argmin_{i\in \mathcal B} J_T(\pi_i)$. Let $(\hat x_t, \hat u_t)$ for $t\geq 0$ denote the state and action trajectories generated by policy $\pi_{i^*}$, where $\tilde x_0=x_0$. Let  $(x_t, u_t)$ for $t\geq 0$ denote the state and action trajectories generated by our Algorithm \ref{alg: exp3 iss}. Further, for $0\leq j \leq J-1$, define $(\tilde x_t, \tilde u_t)$ for $t_j \leq t\leq t_{j+1}-1$ by the state and action trajectories generated by implementing policy $\pi_{i^*}$ in episode $j$ and implementing the same policies as Algorithm \ref{alg: exp3 iss} in episode $0,\dots, j-1$, where $\tilde x_0=x_0$. Notice that $\tilde x_{t_j}=x_{t_j}$ for all $j$.
	

Then, we have the following relation between the policy regret and the auxiliary regret.
\begin{align*}
\text{PolicyRegret}& =	 \E_{(I_j)_{j\geq 0}} \sum_{t=0}^T c_t(x_t, u_t)- \sum_{t=0}^T c_t(\hat x_t, \hat u_t)\\
& =  \E_{(I_j)_{j\geq 0}}   \sum_{j=0}^{J-1} \sum_{t=t_j}^{t_{j+1}-1}  c_t(x_t, u_t) -  \E_{(I_j)_{j\geq 0}} \sum_{j=0}^{J-1} \sum_{t=t_j}^{t_{j+1}-1} c_t(\tilde x_t, \tilde u_t)\\
& \quad +  \E_{(I_j)_{j\geq 0}} \sum_{j=0}^{J-1} \sum_{t=t_j}^{t_{j+1}-1} c_t(\tilde x_t, \tilde u_t)- \sum_{j=0}^{J-1} \sum_{t=t_j}^{t_{j+1}-1} c_t(\hat x_t, \hat u_t)\\
& =\E_{(I_j)_{j\geq 0}}   \sum_{j=0}^{J-1} \tau  g_j(I_j; I_{j-1:0})- \E_{(I_j)_{j\geq 0}}   \sum_{j=0}^{J-1} \tau  g_j(i^*; I_{j-1:0})\\
& \quad +  \E_{(I_j)_{j\geq 0}} \sum_{j=0}^{J-1} \sum_{t=t_j}^{t_{j+1}-1} c_t(\tilde x_t, \tilde u_t)- \sum_{j=0}^{J-1} \sum_{t=t_j}^{t_{j+1}-1} c_t(\hat x_t, \hat u_t)\\
& \leq \text{AuxRegret}+  \E_{(I_j)_{j\geq 0}} \sum_{j=0}^{J-1} \sum_{t=t_j}^{t_{j+1}-1} c_t(\tilde x_t, \tilde u_t)- \sum_{j=0}^{J-1} \sum_{t=t_j}^{t_{j+1}-1} c_t(\hat x_t, \hat u_t)
\end{align*}
Therefore, it suffices to bound  $\E_{(I_j)_{j\geq 0}} \sum_{j=0}^{J-1} \sum_{t=t_j}^{t_{j+1}-1} c_t(\tilde x_t, \tilde u_t)- \sum_{j=0}^{J-1} \sum_{t=t_j}^{t_{j+1}-1} c_t(\hat x_t, \hat u_t).
$ By Assumption \ref{ass: one policy good}, Assumption \ref{ass: ct}, Definition \ref{def: exp ISS}, and Definition \ref{def: incremental global exp stable}, we have the following.
	\begin{align*}
	&	|c_t(\tilde x_t, \tilde u_t) -c_t(\hat x_t, \hat u_t)| \leq L_{c1}[\max(\|\tilde x_t\|, \|\hat x_t\|)+ \max( \|\tilde u_t\|, \|\hat u_t\|) +L_{c2}](\|\tilde x_t-\hat x_t\|+ \|\tilde u_t-\hat u_t\|) \\
	& \leq  L_{c1}[\kappa \rho^{t-t_j}(1+L_{\pi})\max(\|\tilde x_{t_j}\|, \|\hat x_{t_j}\|)+(1+L_{\pi}) \beta w_{\max}+L_{c2}](1+L_{\pi})\kappa \rho^{t-t_j}\|\tilde x_{t_j}-\hat x_{t_j}\| \\
	& \leq L_{c1}\kappa^2 (\rho^2)^{t-t_j}(1+L_{\pi})^2(\|\tilde x_{t_j}\|+\|\hat x_{t_j}\|)^2\\
	& \quad + L_{c1}[(1+L_{\pi}) \beta w_{\max}+L_{c2}](1+L_{\pi})\kappa \rho^{t-t_j}(\|\tilde x_{t_j}\|+\|\hat x_{t_j}\|)\\
	& \leq L_{c1}\kappa^2 (\rho^2)^{t-t_j}(1+L_{\pi})^2(2\|\tilde x_{t_j}\|^2+2\|\hat x_{t_j}\|^2)\\
	& \quad + L_{c1}[(1+L_{\pi}) \beta w_{\max}+L_{c2}](1+L_{\pi})\kappa \rho^{t-t_j}(\|\tilde x_{t_j}\|+\|\hat x_{t_j}\|)
\end{align*}
Therefore, we have the following bound.
\begin{align*}
&\text{PolicyRegret} \leq \text{AuxRegret}+ \E_{(I_j)_{j\geq 0}} \sum_{j=0}^{J-1} \sum_{t=t_j}^{t_{j+1}-1} (c_t(\tilde x_t, \tilde u_t)-c_t(\hat x_t, \hat u_t))\\
	& \leq \text{AuxRegret}  + \E_{(I_j)_{j\geq 0}} \sum_{j=0}^{J-1}\left[ L_{c1} \frac{\kappa^2}{1-\rho^2}(2\|\tilde x_{t_j}\|^2+ 2\|\hat x_{t_j}\|^2)(L_{\pi}+1)^2 \right. \\
	& \left. + L_{c1} \frac{\kappa}{1-\rho}(\|\tilde x_{t_j}\|+ \|\hat x_{t_j}\|)(L_{\pi}+1)(\beta w_{\max}(L_{\pi}+1)+L_{c2})\right]\\
	& \leq  \text{AuxRegret}  +  L_{c1} \frac{\kappa^2}{1-\rho^2}(L_{\pi}+1)^2(2\E_{(I_j)_{j\geq 0}} \sum_{j=0}^{J-1}\|\tilde x_{t_j}\|^2+ 2\E_{(I_j)_{j\geq 0}} \sum_{j=0}^{J-1}\|\hat x_{t_j}\|^2)\\
	& \quad +  L_{c1} \frac{\kappa}{1-\rho}(L_{\pi}+1)(\beta w_{\max}(L_{\pi}+1)+L_{c2})(\E_{(I_j)_{j\geq 0}} \sum_{j=0}^{J-1}\|\tilde x_{t_j}\|+ \E_{(I_j)_{j\geq 0}} \sum_{j=0}^{J-1}\|\hat x_{t_j}\|)\\
	& \overset{(a)}{\leq}\text{AuxRegret} + 2L_{c1} \frac{\kappa^2}{1-\rho^2}(L_{\pi}+1)^2 \left( \frac{1}{1-\gamma_0}\frac{\gamma_1^{\mathbb M+1}-1}{\gamma_1-1} (\|x_0\|^2+ \frac{\alpha_9}{\gamma_1-1})+\frac{2\beta^2w_{\max}^2}{1-\gamma_0} J  \right.\\
	&\qquad \left.+ \frac{\kappa^2}{1-\rho^2}\|x_0\|^2+2\beta^2 w_{\max}^2J	\right)\\
	& \quad +  L_{c1} \frac{\kappa}{1-\rho}(L_{\pi}+1)(\beta w_{\max}(L_{\pi}+1)+L_{c2}) \left( \frac{1}{1-\sqrt{\gamma_0/2}}\frac{\sqrt{\frac{\gamma_1}{2}}^{\mathbb M+1}-1}{\sqrt{\frac{\gamma_1}{2}}-1} (\|x_0\|+ \frac{\alpha_8}{\sqrt{\frac{\gamma_1}{2}}-1}) \right.\\
	& \qquad	\left.  +\frac{\beta w_{\max}}{1-\sqrt{\frac{\gamma_1}{2}}} J+ \frac{\kappa}{1-\rho}\|x_0\|+\beta w_{\max}J \right) \\
	& \leq \textup{AuxRegret} +  \alpha_6 (L_f(1+L_{\pi})\kappa)^{2\mathbb M} \textup{poly}(\|x_0\|, \beta w_{\max}, \bar \pi_0) + \alpha_{7} \beta^2 w_{\max}^2 J.
\end{align*}
where (a) uses $\tilde x_{t_j}=x_{t_j}$, Lemma \ref{lem: bdd on sum xtj}, and 	$\sum_{j=0}^{J-1}\|\hat x_{t_j}\|\leq \frac{\kappa}{1-\rho}\|x_0\|+\beta w_{\max}J$, 
$\sum_{j=0}^{J-1}\|\hat x_{t_j}\|^2\leq \frac{\kappa^2}{1-\rho^2}\|x_0\|^2+2\beta^2 w_{\max}^2J$ by the definition of $\hat x_t$. This completes the proof of Lemma \ref{lem: difference btw PoR and PsR}.

\section{Additional examples}
Consider system $\dot x=-x^3 +u$. Consider a point-wise min-norm policy
\begin{align*}
	\pi(x)=\begin{cases}
		x^3-2x & \text{if } x^2<2\\
		0 & \text{if } x^2\geq 2.
	\end{cases}
\end{align*} 
It is straigthtforward to verify that this policy satisfies Definitions \ref{def: exp ISS} and \ref{def: incremental global exp stable}.

\section{Numerical experiment details}
\label{appdx:numerical}

In this appendix we provide additional details for the experiments presented in \Cref{sec:experiments}.

Recall that the planar ``quadrotor'' has the state
$(\bx, \theta, \dot \bx, \dot \theta)$,
where $\bx \in \R^2$ denotes the two-dimensional position,
$\theta \in \R$ denotes the attitude angle,
and $\dot \bx \in \R^2,\ \dot \theta \in \R$ denote their respective velocities.
The control inputs $(u_1, u_2) \in \R^2$ are the thrusts of the left and right propellers.
The continuous-time dynamics are given by
\begin{equation}
\label{eq:pvtol-dynamics}
	m \ddot \bx = \begin{pmatrix}
		-(u_1 + u_2) \sin\theta \\
		-(u_1+u_2) \cos\theta - mg
	\end{pmatrix} - d_x \norm{\dot \bx} \dot \bx,
	\quad \quad
	\quad I \ddot \theta = r(u_1-u_2) - d_\theta |\dot \theta| \dot \theta,
\end{equation}
where $\norm{\cdot}$ denotes the Euclidean norm,
$g \approx 9.81$ is the gravitational constant,
$m > 0$ is the mass, $I > 0$ is the moment of inertia, $r > 0$ is the arm length,
and ${d_x > 0,\ d_\theta > 0}$ are drag constants.
For our experiment, we discretize the dynamics with symplectic Euler integration.
Our task is to fly the quadrotor towards the origin, which is expressed by the optimal control cost
$
	c_t(x_t, u_t) = \norm{\bx}_2^2.
$
We consider a parameterized family of controllers following a geometric proportional-derivative control law.
Our controllers are analogous to the widely-used 3D quadrotor controller proposed by \citet{lee2010geometric}.
Specifically, the control law computes a desired acceleration according to
\[
	\ddot \bx_\des = -k_p \bx - k_d \dot \bx,
\]
which implies a desired thrust direction
\[
	t_\des = \ddot \bx_\des + [0, \ g]^\top.
\]
This, in turn, implies a desired attitude angle $\theta_\des$ (computation omitted),
which is used to compute a desired angular acceleration
\[
	\ddot \theta_\des = -k_p^\theta (\theta - \theta_\des) - k_d^\theta \dot \theta.
\]
These are converted to a desired total thrust $h \in \R$ and torque $\tau \in \R$ using
\[
	h = \tilde m [\cos \theta, \ \sin \theta]^\top t_\des,
	\quad
	\tau = \tilde I \ddot \theta_\des,
\]
where $\tilde m$ and $\tilde I$ are estimates of the planar quadrotor's mass $m$ and moment of inertia $I$.
Finally, to avoid numerical instability for destabilizing controllers, we clamp the desired torque and thrust and generate the propeller inputs according by solving the linear system
\[
	\begin{pmatrix} \clip(h, 10^3) \\ \clip(\tau, 10^4) \end{pmatrix}
	= 
	\begin{pmatrix}
		1 & 1 \\
		r & -r \\
	\end{pmatrix}
	\begin{pmatrix} u_1 \\ u_2 \end{pmatrix}
\]
for $(u_1, u_2)$, where $\clip(x, a) = \max \{ \min \{ x, a \}, -a \}$.
\footnote{The larger limit for torque ensures that the closed-loop system can have faster attitude dynamics than position dynamics, which is necessary for quadrotor stabilization.
In real systems this is naturally achieved since typically $I/r \ll m$,
but our unrealistically large nominal moment of inertia $I/r = m$ necessitates large torques.}

The parameters of the controller are the gains $(k_p, k_d, k_p^\theta, k_d^\theta)$
and the system-identification estimates $(\tilde m,\ \tilde{I})$.
We select nominal gains $\bar k_p = 40,\ \bar k_d = 10,\ \bar k_p^\theta = 400,\ \bar k_d^\theta = 100$,
define the scale factors $S = \{ \frac{1}{10}, 1, 10 \}$,
and generate a pool of candidate controllers according to the rules
\begin{align*}
	k_p &\in \{ s \bar k_p : s \in S \}
	,\ &
	(k_d / k_p) &\in \{ s (\bar k_d / \bar k_p) : s \in S\}, \\
	k_p^\theta &\in \{ s \bar k_p^\theta : s \in S \}
	,\ &
	(k_d^\theta / k_p^\theta) &\in \{ s (\bar k_d^\theta / \bar k_p^\theta) : s \in S \}.
\end{align*}
These rules ensure that the ratio between the proportional and derivative gains do not become overly different from nominal, which helps ensure that a reasonable amount of the controllers are stabilizing.
This yields a pool of $81$ controllers.
We also consider that all controllers have an incorrect estimate $\tilde m = 2m$ but a correct estimate $\tilde I = I$.
The true system has the parameters $m = 1,\ r = 1,\ I = 1$
as well as drag constants $d_x = 10^{-4},\ d_\theta = 10^{-8}.$
We discretize with a time interval of $0.01$ seconds
and subject the system to thrust and torque disturbances sampled i.i.d. from a zero-mean Gaussian distribution with $\sigma = 0.1$. For all algorithms, we use
the decay parameters $\kappa = 1.1,\ \rho = 0.995,\ \beta w_{\max} = 4.35$.




%
%


\end{document}